\documentclass{article}
\usepackage{graphicx}
\usepackage{amssymb,amsmath}
\usepackage{amsthm}
\usepackage{color,graphicx}
\usepackage{hyperref}
\usepackage{color}
\usepackage{mathabx}
\usepackage{subfigure}
\usepackage{csquotes}
\usepackage{float}
\usepackage{soul}

\usepackage{verbatim}

\usepackage{mathrsfs}
\usepackage[a4paper, left=2.5cm, right=2cm, top=3cm, bottom=3cm]{geometry}

\usepackage{relsize} 

\usepackage{mathtools} 




\newcommand{\be}{\begin{equation}}

\newcommand{\ee}{\end{equation}}

\newcommand{\R}{{\mathbb R}}

\newcommand{\Ker}{{\rm \,Ker}}

\numberwithin{equation}{section}
\numberwithin{figure}{section}
\renewcommand{\theequation}{\thesection.\arabic{equation}}
\newtheorem{theorem}{Theorem}[section]
\newtheorem{proposition}[theorem]{Proposition}
\newtheorem{remark}[theorem]{Remark}
\newtheorem{lemma}[theorem]{Lemma}

\newtheorem{definition}[theorem]{Definition}

\setlength{\parindent}{0pt}
\title{Stability of Periodic Waves for the Defocusing Fractional Cubic Nonlinear Schr\"odinger Equation}

\author{Handan Borluk\footnote{\texttt{handan.borluk@ozyegin.edu.tr}} \\  
{\small Department of Basic Sciences, Ozyegin University},
{\small  Cekmekoy, Istanbul,  Turkey} \\
\\
Gulcin M. Muslu\footnote{\texttt{ gulcin@itu.edu.tr}} \\ 
   {\small  Department of Mathematics, Istanbul Technical University,  Maslak, Istanbul,  Turkey.}\\
\\
F\'abio Natali\footnote{\texttt{fmanatali@uem.br}} \\ 
   {\small Department of Mathematics, State University of Maring\'a, Maring\'a, PR, Brazil}\\
}

\begin{document}
\maketitle

\begin{abstract}
In this paper, we determine the spectral instability of periodic odd waves for the defocusing fractional cubic nonlinear Schr\"odinger equation. Our approach is based on periodic perturbations that have the same period as the standing wave solution, and we construct real periodic waves by minimizing a suitable constrained problem. The odd solution generates three negative simple eigenvalues for the associated linearized operator, and we obtain all this spectral information by using tools related to the oscillation theorem for fractional Hill operators. Newton's iteration method is presented to generate the odd periodic standing wave solutions and numerical results have been used to apply the spectral stability theory via Krein signature as established in \cite{KapitulaKevrekidisSandstedeI} and \cite{KapitulaKevrekidisSandstedeII}.  
\end{abstract}

\emph{Keywords:}  defocusing fractional Schr\"odinger equation, periodic solutions via constrained minimization problem, spectral stability, Newton's iteration method. \\

\renewcommand{\theequation}{\arabic{section}.\arabic{equation}}
\setcounter{equation}{0}


\section{Introduction}

\qquad The main goal of this paper is to present new results concerning the existence and spectral stability of periodic standing waves for the defocusing fractional nonlinear Schrödinger equation (dfNLS)
\begin{equation}\label{fNLS1}
	iU_t - (-\Delta)^sU - |U|^2U = 0.
\end{equation}
Here $U=u+iv\equiv(u,v):\mathbb{T}\times \mathbb{R}\longrightarrow\mathbb{C}$ is a complex-valued function and $2\pi$-periodic at the first variable with $\mathbb{T}:= [-\pi, \pi]$. In our context, the fractional Laplacian $(-\Delta)^s$ is defined as a pseudo-differential
operator
\begin{equation}\label{FLaplacian}
\widehat{(-\Delta)^sV}(\xi)=|\xi|^{2s}\widehat{V}(\xi),
\end{equation}
where $\xi \in \mathbb{Z}$ and $s \in (0,1]$ (see \cite{RoncalStinga}). 

\qquad The dfNLS equation \eqref{fNLS1} admits the following conserved quantities $E,F:H^s_{per}\times H_{per}^s \longrightarrow \mathbb{R}$ which are given by
\begin{equation}\label{E}
	E(U)=\frac{1}{2}\int_{-\pi}^{\pi} |(-\Delta)^{\frac{s}{2}} U|^2+\frac{1}{2}|U|^4\; dx,
\end{equation}
and
\begin{equation}\label{F}
	F(U)=\frac{1}{2} \int_{-\pi}^{\pi}|U|^2\; dx.
\end{equation}

\qquad A standing periodic wave solution for the equation \eqref{fNLS1} has the form
\begin{equation}\label{standingwave}
U(x,t)=e^{i\alpha t}\varphi(x),
\end{equation}
where $\varphi: \mathbb{T} \longrightarrow \mathbb{R}$ is a smooth $2\pi$-periodic function and $\alpha \in \mathbb{R}$ represents the wave frequency which is assumed to be negative for now. Substituting \eqref{standingwave} into \eqref{fNLS1}, we obtain the following differential equation with fractional derivative

\begin{equation}\label{EDO1}
(-\Delta)^s\varphi+\alpha \varphi+\varphi^3=0.
\end{equation}

\qquad For $\alpha:=-\omega<0$, we consider the standard Lyapunov functional defined as
\begin{equation}\label{G}
G(U):=E(U)-\omega F(U).
\end{equation}
By \eqref{EDO1}, we obtain $G'(\varphi,0)=0$, that is, $(\varphi,0)$ is a critical point of $G$. In addition, the linearized operator around the pair $(\varphi,0)$ is given by
\begin{equation}\label{matrixop}
 \mathcal{L}:= G''(\varphi,0)=\begin{pmatrix}
\mathcal{L}_1 & 0 \\
0 & \mathcal{L}_2
\end{pmatrix},
\end{equation}
where
\begin{equation}\label{L1L2}
\mathcal{L}_1=(-\Delta)^s-\omega+3\varphi^2
\qquad \text{and} \qquad
\mathcal{L}_2=(-\Delta)^s-\omega+\varphi^2.
\end{equation}
Both operators $\mathcal{L}_1$ and $\mathcal{L}_2$ are self-adjoint  and they are deﬁned in $L^2_{per}$ with dense domain $H^{2s}_{per}$. It is worth mentioning that operator $\mathcal{L}$ in $(\ref{matrixop})$ plays an important role in our study. In order to set our spectral problem concerning periodic waves with respect to perturbation with the same period, we consider the complex evolution $U=(u,v)$ associated with the equation $(\ref{fNLS1})$. To simplify the notation, let us consider $\Phi=(\varphi,0)$ and the perturbation
\begin{equation}\label{U-1}
	U(x,t) = e^{-i \omega t} (\Phi(x) + W(x,t)),
\end{equation} where $W(x,t) =w_1(x,t)+iw_2(x,t)\equiv (w_1(x,t),w_2(x,t))$. Substituting \eqref{U-1} into \eqref{fNLS1} and neglecting all the nonlinear terms, we get the following linearized equation:
\begin{equation}\label{spectral}
	\frac{d}{dt} W(x,t) = J \mathcal{L} W(x,t),
\end{equation}
where $J$ is given by
\begin{equation}\label{J}
	J = \left( \begin{array}{cccc}
		0 & 1  \\
		
		-1 & 0 \end{array}
	\right),
\end{equation} and $\mathcal{L}$ is the diagonal operator given by $(\ref{matrixop})$.\\

\qquad To define the concept of spectral stability within our context, we need to substitute the growing mode solution of the form $W(x,t)=e^{\lambda t}w(x)$ into the linear equation \eqref{spectral} to obtain the following spectral problem
\begin{equation*}
	J \mathcal{L} w = \lambda w.
\end{equation*} 
\indent The definition of spectral stability in our context reads as follows.

\begin{definition}\label{def-spectralstability}
	The periodic wave $\Phi$ is said to be spectrally stable by periodic perturbations that have the same period as the standing wave solution if $\sigma(J \mathcal{L}) \subset i \mathbb{R}$. Otherwise, if there exists at least one eigenvalue $\lambda$ associated with the operator $J \mathcal{L}$ that has a positive real part, $\Phi$ is said to be spectrally unstable.
\end{definition}
\qquad The study of spectral (orbital) stability of periodic standing waves of the form $U(x,t)=e^{i\alpha t}\varphi(x)$ associated to the cubic nonlinear Schr\"odinger equation 
\begin{equation}\label{KF1}
iU_t+U_{xx}+b|U|^2U=0,
\end{equation}
has attracted the interest of a large number of researchers. Let $\alpha>0$ be fixed. The case $b=1$ in $(\ref{KF1})$ represents the \textit{focusing} nonlinearity, while $b=-1$ represents the \textit{defocusing} nonlinearity. In both cases, sufficient conditions and applications of known techniques have been shown to be efficient. For the case $b=1$, the author of \cite{angulo} established the stability properties of periodic standing waves solutions with \textit{dnoidal} profile with respect to perturbations with the same period $L$ by using the ideas introduced in \cite{bona}  and \cite{weinstein}, (see also \cite{gallay1} and \cite{lecoz}). Existence of smooth branches of solutions with \textit{cnoidal} profiles were also reported in \cite{angulo}; however, the author was not able to obtain the  orbital stability/instability in the energy space for these waves. Next, by using the techniques introduced in \cite{grillakis1} and \cite{grillakis2}, the cnoidal waves were shown to be orbitally stable  in \cite{gallay1} and \cite{gallay2} with respect to anti-periodic perturbations, i.e., when $f$ satisfies $f(x+L/2)=-f(x)$ for all $x\in\R$. Spectral stability with respect to bounded or localized perturbations were also reported in \cite{gallay1}. For $\alpha>0$ in a suitable interval $ (0,\alpha^*)$, the authors of \cite{lecoz} have established spectral stability results for the cnoidal waves with respect to perturbations with the same period  $L$ and orbital stability results in the space  constituted by anti-periodic functions with period $L/2$ (see also \cite{natali-moraes-loreno-pastor}). Their proofs rely on proving that the cnoidal waves satisfy a convenient minimization problem with constraints, which yields the orbital stability. The spectral stability follows by relating the coercivity of the linearized action  with the number of eigenvalues with negative Krein signature of $J\mathcal{L}$.

 \qquad The integrability of the equation \eqref{KF1} can be used to determine spectral stability results  of periodic waves. In \cite{deconinckupsal} the authors studied periodic solutions with dnoidal and cnoidal type for the case $b=1$. The same approach was used in \cite{BND} to prove spectral stability results for the case $b=-1$ and snoidal type solutions. The spectral stability presented in both cases was with respect to subharmonic perturbations, that is, perturbation of an integer multiple $n$ times the minimal period of the solution. The authors employed the arguments in \cite{grillakis1} to conclude the orbital stability by considering the orbit generated only by one symmetry of the equation \eqref{KF1}. 

\qquad We present some recent contributions concerning the fractional version 
\begin{equation}\label{KF2}
iU_t-(-\Delta)^sU+b|U|^2U=0,
\end{equation}
of the equation $(\ref{KF1})$. Indeed, when $s \in (0,1)$ and $b=\pm 1$ the orbital stability of real-valued, even and anti-periodic  standing wave solutions $\varphi$ of \eqref{fNLS1} has been studied in \cite{ClaassenJohnson}. The authors determined the existence of real solutions via a minimization problem in the context of anti-periodic functions (denoted by $L^2_a(0,L)$) and they established that the associated linearized operator acting in $L^2_a(0,L)$ is non-degenerate. By the additional assumption $\tfrac{d}{d\alpha} \int_{0}^{L} \varphi^2dx>0$, the authors were able to show that $\varphi$ is orbitally stable with respect to anti-periodic perturbations in a suitable subspace of $H^s(0,L) \cap L^2_a(0,L) $.

\qquad In \cite{natalifracNLS} the authors studied the existence and orbital stability of positive and periodic standing wave solutions of the form $U(x,t)=e^{i\alpha t}\varphi(x)$ for the equation $(\ref{KF2})$ with with $b=1$. The existence of periodic waves was determined by using a minimizing constrained problem in the complex setting and the orbital stability was proved by combining some tools regarding the oscillation theorem for fractional Hill operators and the Vakhitov-Kolokolov condition. The authors also presented a numerical approach to generate the periodic standing wave solutions of $(\ref{KF2})$ with $b=1$ by using Petviashvili's iteration method. It is important to mention that the numerical method has also been used to establish the values of the frequency $\alpha>\frac{1}{2}$ and the index $s>0$ in $(\ref{KF1})$ where the wave $\varphi$ is spectrally (orbitally) stable or not. In fact, if $s\in \left(\frac{1}{4},\frac{1}{2}\right]$ the periodic wave is spectrally (orbitally) unstable. If $s\in [s^*,1],$ the periodic wave is spectrally (orbitally) stable, where $s^*\approx 0.6$. For $s\in\left(\frac{1}{2},s^*\right)$, the authors guaranteed the existence of a critical value $\alpha_c>\frac{1}{2}$ such that the periodic wave is spectrally (orbitally) unstable if $\alpha\in \left(\frac{1}{2},\alpha_c\right)$ and spectrally (orbitally) stable if $\alpha>\alpha_c$.

\qquad Now, we give the main points of our paper. First, we show the existence of an odd periodic two-lobe solution $\varphi$ for the equation \eqref{EDO1}. For this aim, we need to solve the real constrained minimization problem
\begin{equation}\label{minimization1}
\inf\left\{\mathcal E(u)=E(u,0):= \frac{1}{2}\int_{-\pi}^{\pi}((-\Delta)^{\frac{s}{2}} u)^2+\frac{1}{2}u^4\; dx \; ; \; u \in H^s_{per,odd} ,\:  \int_{-\pi}^{\pi} u^2 \; dx=\tau \right\},
\end{equation}
for fixed $\tau>0$, where $s \in \left(\tfrac{1}{4},1\right]$.

\qquad Periodic odd solutions of $(\ref{minimization1})$ are real functions $\varphi$ and therefore the existence of a periodic standing wave having the form $(\ref{standingwave})$ is established without further problems. This fact is different from the approaches in \cite{ClaassenJohnson} and \cite{natalifracNLS} since they obtained, complex periodic solutions of a complex constrained minimization problem. In both cases, they need to assume suitable assumptions in order to get the existence of periodic standing waves of the form $(\ref{standingwave})$ (see \cite[Lemma 2.2]{ClaassenJohnson} and \cite[Remark 3.3]{natalifracNLS}). Our periodic solution obtained from the problem $(\ref{minimization1})$ enables us to consider a real-valued solution $\varphi$ for the problem $(\ref{minimization1})$ which is automatically odd. In addition, we can consider that $\varphi$ has a two-lobe profile for all $\omega>1$ (see Proposition $\ref{theorem1even}$).

\qquad A different way to construct periodic real-valued solutions associated with the equation \eqref{EDO1} is established by using the local and global bifurcation theory as determined in \cite{buffoni-toland}. First, we construct small amplitude periodic solutions in the same way as in \cite{NataliLePelinovsky} (see also \cite{bruell-dhara}) for $\omega>1$ and close to the bifurcation point $1$. Afterwards, we establish sufficient conditions to extend $\omega$ to the whole interval $(1,+\infty)$ by constructing an odd periodic continuous function $\omega \in \left( 1, +\infty \right) \longmapsto \varphi_{\omega} \in H_{per,odd}^{2s}$ where $\varphi_{\omega}$ is a periodic solution of $(\ref{EDO1})$. It is important to mention that the periodic wave obtained by the global bifurcation theory may not have a two-lobe profile, and thus we can choose the periodic waves which arise as a minimum of the problem $(\ref{minimization1})$. The existence of small amplitude waves associated with the Schr\"odinger equation was determined in \cite{gallay1} for the equation $(\ref{KF1})$ with $b=\pm1$. They first show that these waves are orbitally stable within the class of solutions that have the same period. For the case of general bounded perturbations, they prove that the small amplitude travelling waves are stable in the defocusing case and unstable in the focusing case.

\qquad Since the minimizer $\varphi$ of \eqref{minimization1} is a real odd two-lobe solution of $(\ref{EDO1})$, we obtain that $n(\mathcal{L}_1)=1$ (see Lemma $\ref{simpleKerneleven}$), where $n(\mathcal{A})$ denotes the number of negative eigenvalues of a certain linear operator $\mathcal{A}$ (counting multiplicities). In addition, Lemma \ref{simpleKerneleven} also gives that $\ker(\mathcal{L}_1)=[\varphi']$ and we can use the implicit function theorem to obtain, for a fixed value $\omega_0>1$, the existence of an open interval $\mathcal{I}$ containing $\omega_0$ and a smooth function 
\begin{equation}
\mathcal{I}\ni\omega\longmapsto \varphi_{\omega}\in H_{per,odd}^{2s}
\label{smoothcurve123}\end{equation}
that solves equation $(\ref{EDO1})$. Deriving this equation with respect to $\omega\in \mathcal{I}$, it follows that $\mathcal{L}_1(\partial_{\omega}\varphi)=\varphi$, so that $\varphi\in \mbox{range}(\mathcal{L})$. Concerning the linear operator $\mathcal{L}_2$, we obtain by Lemma $\ref{simpleKernel2even}$ that $n(\mathcal{L}_2)=2$ and $\ker(\mathcal{L}_2)=[\varphi]$. Gathering all spectral information regarding $\mathcal{L}_1$ and $\mathcal{L}_2$, we obtain from the fact $\mathcal{L}$ in $(\ref{matrixop})$ has a diagonal form that $n(\mathcal{L})=3$ and $\ker(\mathcal{L})=[(\varphi',0),(0,\varphi)]$.

\qquad The strategy to prove our spectral instability result is based on an adaptation of the arguments in \cite{KapitulaKevrekidisSandstedeI} and \cite{KapitulaKevrekidisSandstedeII}. Let  $z(\mathcal{A})$ denote the dimension of the kernel of a certain linear operator $\mathcal{A}$. Since in our case we have $z(\mathcal{L})=2$, let $\Theta_1=(\varphi',0)$ and $\Theta_2=(0,\varphi)$ represent the elements in $\ker(\mathcal{L})$. Let $V$ be the $2\times 2$ matrix whose entries are given by 
\begin{equation}\label{V}
V_{jl} = ( \mathcal{L}^{-1} J \Theta_j, J \Theta_l )_{L_{per}^2\times L_{per}^n},
\end{equation}
where $1\leq j,l\leq 2$. Thus, $V$ is given by
\begin{equation}\label{V-1}
\begin{array}{lllll}
V &=& \left( \begin{array}{cc}
		(\mathcal{L}^{-1} J \Theta_1, J \Theta_1)_{L_{per}^2} & (\mathcal{L}^{-1} J \Theta_1, J \Theta_2)_{L_{per}^2} \\
		(\mathcal{L}^{-1} J \Theta_2, J \Theta_1)_{L_{per}^2} & (\mathcal{L}^{-1} J \Theta_2, J \Theta_2)_{L_{per}^2}
	\end{array}
\right).\\\\
	&=&\left( \begin{array}{cc}
	( \mathcal{L}_2^{-1} \varphi', \varphi' )_{L^2_{per}} & 0 \\
	0 & ( \mathcal{L}_1^{-1} \varphi,\varphi )_{L^2_{per}}
\end{array}
	\right).
\end{array}
\end{equation}
\qquad On the other hand, the equality
\begin{equation}\label{krein}
	k_r + k_c + k_{-} = n(\mathcal{L}) - n(V),
\end{equation}
is given in \cite{KapitulaKevrekidisSandstedeII} and the left-hand side of the equality $(\ref{krein})$ is called \textit{hamiltonian Krein index}. Concerning operator $\mathcal{L}$ in $(\ref{matrixop})$, let $k_r$ be the number of real-valued and positive eigenvalues (counting multiplicities). The number $k_c$ denotes the number of complex-valued eigenvalues with a positive real part and $k_-$ is the number of pairs of purely imaginary eigenvalues with negative Krein signature of $\mathcal{L}$. Since $k_c$ and $k_-$ are always even numbers, we obtain that if the right-hand side in \eqref{krein} is an odd number, then $k_r \geq 1$ and we have automatically the spectral instability. Moreover, if the difference $n(\mathcal{L}) - n(V)$ is zero, then $k_c = k_- = k_r = 0$ which implies the spectral stability.

\qquad Since $n(\mathcal{L})=3$ and $z(\mathcal{L})=2$, the case $n(V)=3$ cannot be considered according to the square matrix in $(\ref{V-1})$. Now, if $n(V)=0$, we obtain that $n(\mathcal{L})-n(V)=3$ which implies the spectral instability. When $n(V)=1$, we cannot conclude the spectral instability since the difference $n(\mathcal{L})-n(V)=2$ is an even number. The spectral stability result is inconclusive since the values of $k_c$ and $k_{-}$ are always even numbers (we can have zero or two eigenvalues with positive real part associated with the operator $J\mathcal{L}$). However, if $n(V)=2$, then $n(\mathcal{L})-n(V)=1$ and this implies the spectral instability. To obtain a suitable conclusion for the spectral stability, we need to calculate $( \mathcal{L}_2^{-1} \varphi', \varphi' )_{L^2_{per}}$ and $( \mathcal{L}_1^{-1} \varphi,\varphi )_{L^2_{per}}$.
In our approach, we shall consider the restrictions of the linearized operator $\mathcal{L}$ in $(\ref{matrixop})$ to even and odd functions. These operators will be denoted as $\mathcal{L}_{\text{even}}$ and $\mathcal{L}_{\text{odd}}$ and we can conclude in our case that $n(\mathcal{L}_{even})=2$, $n(\mathcal{L}_{odd})=1$, $\ker(\mathcal{L}_{even})=[(\varphi',0)]$, and $\ker(\mathcal{L}_{odd})=[(0,\varphi)]$. Thus, for the  matrix $V$ with these restrictions we have $V_{even}=( \mathcal{L}_2^{-1} \varphi', \varphi' )_{L^2_{per}}$ and $V_{odd}=( \mathcal{L}_1^{-1} \varphi,\varphi )_{L^2_{per}}$, where $V_{even}$ and $V_{odd}$ are, respectively, the restriction of the matrix $V$ to the even and odd periodic functions.

\qquad  To calculate $V_{even}$ and $V_{odd}$ we use a numerical approach.  To the best of our knowledge, the exact solution of the equation \eqref{fNLS1} is known only for $s=1$. Therefore,   we first generate the odd periodic two-lobe solution $\varphi$ by using Newton's iteration method for $s\in (\frac{1}{4},1 ]$. We then evaluate the necessary inner products numerically by using the trapezoidal rule. The numerical approach enables us to conclude that $V_{even}=( \mathcal{L}_2^{-1} \varphi', \varphi' )_{L^2_{per}}$ is positive while $V_{odd}=( \mathcal{L}_1^{-1} \varphi,\varphi )_{L^2_{per}}$ is negative. Since $n(\mathcal{L})=3$ and $n(V)=1$, we see that the difference $n(\mathcal{L})-n(V)=3-1=2$ is even number and the spectral stability is inconclusive. However, if we restrict our analysis to the space $L_{per,odd}^2\times L_{per,odd}^2$ of odd periodic functions, we obtain $n(\mathcal{L}_{odd})=1$ and $V_{odd}>0$. Since the difference  $n(\mathcal{L}_{odd})-n(V_{odd})=1-0=1$ is an odd number, we obtain that the wave $\Phi$ is spectrally unstable. The same stability property occurs if one considers the operator $\mathcal{L}$ in the space $L_{per,even}^2\times L_{per,even}^2$ of even periodic functions. In this case, we have $n(\mathcal{L}_{even})=2$ and $V_{even}<0$. Since the difference  $n(\mathcal{L}_{even})-n(V_{even})=2-1=1$ is also an odd number, we obtain that the wave $\Phi$ is spectrally unstable.

\qquad In both cases,  our main result is given by the following theorem:

\begin{theorem}\label{mainth} Let $s\in\left(\frac{1}{4},1\right]$ and $\omega>1$ be fixed. Consider $\varphi$ as the odd and periodic two-lobe solution for the equation $(\ref{EDO1})$ obtained by the minimization problem $(\ref{minimization1})$. The periodic wave is spectrally unstable.
\end{theorem}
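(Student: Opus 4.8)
The overall plan is to apply the hamiltonian Krein index identity \eqref{krein} not to $\mathcal{L}$ on the whole space, but to its restrictions to the two parity sectors that $J\mathcal{L}$ preserves, where the count becomes odd and forces a real unstable eigenvalue. I would start from the spectral data already in hand: Lemma \ref{simpleKerneleven} gives $n(\mathcal{L}_1)=1$ and $\ker(\mathcal{L}_1)=[\varphi']$, while Lemma \ref{simpleKernel2even} gives $n(\mathcal{L}_2)=2$ and $\ker(\mathcal{L}_2)=[\varphi]$, so that $n(\mathcal{L})=3$, $z(\mathcal{L})=2$ and $\ker(\mathcal{L})=[(\varphi',0),(0,\varphi)]$. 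Since $\varphi$ is odd, the potentials $\varphi^2$ and $3\varphi^2$ are even and both $\mathcal{L}_1,\mathcal{L}_2$ commute with the reflection $x\mapsto -x$; moreover $J$ maps $(w_1,w_2)$ to $(w_2,-w_1)$ and $J\mathcal{L}(w_1,w_2)=(\mathcal{L}_2 w_2,-\mathcal{L}_1 w_1)$, so both $J$ and $J\mathcal{L}$ leave invariant the symplectic sectors $L^2_{per,even}\times L^2_{per,even}$ and $L^2_{per,odd}\times L^2_{per,odd}$. An eigenvalue of $J\mathcal{L}$ with positive real part in either sector is an eigenvalue of the full problem, so it suffices to produce instability in one of them.

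Next I would assemble the matrix $V$ of \eqref{V}. Using $J(\varphi',0)=(0,-\varphi')$ and $J(0,\varphi)=(\varphi,0)$, together with $\varphi\perp\varphi'$ (which places $\varphi\in\mathrm{range}(\mathcal{L}_1)$ and $\varphi'\in\mathrm{range}(\mathcal{L}_2)$), the cross terms vanish and $V$ is diagonal exactly as in \eqref{V-1}. Restricting to each sector collapses the $2\times 2$ matrix to a single scalar: invoking the counts recorded via the oscillation theorem, on the even sector one has $\ker(\mathcal{L}_{even})=[(\varphi',0)]$, $n(\mathcal{L}_{even})=2$ and the scalar $V_{even}=(\mathcal{L}_2^{-1}\varphi',\varphi')_{L^2_{per}}$, while on the odd sector $\ker(\mathcal{L}_{odd})=[(0,\varphi)]$, $n(\mathcal{L}_{odd})=1$ and the scalar $V_{odd}=(\mathcal{L}_1^{-1}\varphi,\varphi)_{L^2_{per}}$, with $\mathcal{L}_2^{-1}$ understood on the even subspace and $\mathcal{L}_1^{-1}$ on the odd subspace. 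For the odd block an analytic shortcut is available: differentiating \eqref{EDO1} along the smooth branch \eqref{smoothcurve123} yields $\mathcal{L}_1(\partial_\omega\varphi)=\varphi$, whence $V_{odd}=(\partial_\omega\varphi,\varphi)_{L^2_{per}}=\tfrac12\frac{d}{d\omega}\|\varphi_\omega\|_{L^2_{per}}^2$ is precisely the Vakhitov--Kolokolov slope; for the even block no such identity seems to be available, since $\mathcal{L}_2\varphi'=-2\varphi^2\varphi'$ does not invert to anything recognizable.

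With these ingredients, reading \eqref{krein} on the full space gives $k_r+k_c+k_-=n(\mathcal{L})-n(V)=3-1=2$, an even number, so the full-space test is inconclusive (it permits $k_r=0$). Reading \eqref{krein} sector by sector is sharper, since the Jordan structure is controlled once the scalar $V$ is nonzero: it gives $k_r+k_c+k_-=n(\mathcal{L}_{even})-n(V_{even})=2-n(V_{even})$ on the even sector and $k_r+k_c+k_-=n(\mathcal{L}_{odd})-n(V_{odd})=1-n(V_{odd})$ on the odd sector. Because $k_c$ and $k_-$ are always even, it is enough that one of these right-hand sides be odd to force $k_r\geq 1$, hence a real positive eigenvalue of $J\mathcal{L}$ and spectral instability. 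This is exactly what the sign pattern $V_{even}<0$ (so $n(V_{even})=1$, giving $2-1=1$) and $V_{odd}>0$ (so $n(V_{odd})=0$, giving $1-0=1$) delivers: each sector independently produces $k_r\geq 1$.

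The main obstacle is establishing these two signs. An explicit profile is available only for $s=1$, so for $s\in(\tfrac14,1]$ I would generate the odd two-lobe solution $\varphi$ by Newton's iteration, solve the two linear systems $\mathcal{L}_2\psi_1=\varphi'$ and $\mathcal{L}_1\psi_2=\varphi$ on the appropriate parity subspaces, and evaluate $(\mathcal{L}_2^{-1}\varphi',\varphi')$ and $(\mathcal{L}_1^{-1}\varphi,\varphi)$ by the trapezoidal rule, confirming that $V_{even}<0<V_{odd}$ throughout the range of $s$ and of $\omega>1$. Granting the numerically observed signs, the index count on either parity sector already yields $k_r\geq 1$, and the spectral instability of $\Phi$ follows by Definition \ref{def-spectralstability}. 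The delicate point to monitor is the robustness of the computed signs where cancellation is most severe, namely as $\omega\to 1^+$ (where $\varphi$ degenerates to the bifurcation point and $\|\varphi\|_{L^2_{per}}$ is small) and near the endpoints $s=\tfrac14$ and $s=1$; anchoring the evaluation at the exactly solvable case $s=1$ and tracking the signs continuously in $s$ would provide a useful consistency check.
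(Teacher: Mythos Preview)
Your proposal is correct and follows essentially the same route as the paper: you invoke the parity decomposition of $J\mathcal{L}$, use Lemmas~\ref{simpleKerneleven}--\ref{simpleKernel2even} to get $n(\mathcal{L}_{\text{even}})=2$, $n(\mathcal{L}_{\text{odd}})=1$ with one-dimensional kernels, identify $V_{\text{odd}}=\tfrac12\frac{d}{d\omega}\|\varphi_\omega\|_{L^2_{per}}^2$ via the smooth branch, compute both $V_{\text{even}}<0$ and $V_{\text{odd}}>0$ numerically (Newton's iteration for $\varphi$ and for $\mathcal{L}_2\beta=\varphi'$, trapezoidal quadrature), and conclude instability from the odd Krein count $2-1=1$ or $1-0=1$ in either sector. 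This matches the argument carried out in Section~5 of the paper.
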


\begin{remark}\label{rem1}
We can employ the abstract approach from \cite{grillakis1} to establish the orbital instability of periodic waves within the energy space $H_{per}^{s}\times H_{per}^{s}$. In fact, the proof of this assertion revolves around demonstrating the orbital instability within the space $H_{per,odd}^{s}\times H_{per,odd}^{s}$ by exclusively taking into account the rotational symmetry. This is because the translational symmetry is not an invariant in this space. Since $n(\mathcal{L}_{odd})=1$ and $\ker(\mathcal{L}_{odd})=[(0,\varphi)]$, we can define a smooth function $\mathsf{d}:(1,+\infty)\longrightarrow\mathbb{R}$ as follows: $\mathsf{d}(\omega)=E(\varphi,0)-\omega F(\varphi,0)=G(\varphi,0)$. Utilyzing equation $(\ref{smoothcurve})$, we deduce from the fact that $(\varphi,0)$ is a critical point of $G$ in $(\ref{G})$ that $\mathsf{d}''(\omega)=-\frac{1}{2}\frac{d}{d\omega}\int_{-\pi}^{\pi}\varphi(x)^2dx=-\frac{1}{2}V_{odd}<0$. By applying the instability theorem from \cite{grillakis1}, we conclude that the periodic wave $\varphi$ is orbitally unstable in both $H_{per,odd}^{s}\times H_{per,odd}^{s}$ and, consequently, in $H_{per}^{s}\times H_{per}^{s}$.
\end{remark}
\qquad Our paper is organized as follows: In Section 2, we give some remarks on the orbital stability and the global well-posedness for the Cauchy problem associated to the equation $(\ref{fNLS1})$. The existence of odd periodic minimizers with a two-lobe profile as well as the existence of small amplitude periodic waves are determined in Section 3. In Section 4, we present spectral properties for the linearized operator related to the dfNLS equation. Finally, our result about orbital instability associated with periodic waves is  shown in Section 5.\\

\textbf{Notation.} For $s\geq0$, the real Sobolev space
$H^s_{per}:=H^s_{per}(\mathbb{T})$
consists of all real-valued periodic distributions $f$ such that
\begin{equation}\label{norm1}
\|f\|^2_{H^s_{per}}:= 2\pi \sum_{k=-\infty}^{\infty}(1+k^2)^s|\hat{f}(k)|^2 <\infty,
\end{equation}
where $\hat{f}$ is the periodic Fourier transform of $f$ and $\mathbb{T}=[-\pi,\pi]$. The space $H^s_{per}$ is a  Hilbert space with the inner product denoted by $(\cdot, \cdot)_{H_{per}^s}$. When $s=0$, the space $H^s_{per}$ is isometrically isomorphic to the space $L^2_{per}:=H^0_{per}$ (see, e.g., \cite{IorioIorio}). The norm and inner product in $L^2_{per}$ will be denoted by $\|\cdot \|_{L_{per}^2}$ and $(\cdot, \cdot)_{L_{per}^2}$, respectively. To avoid overloading of notation, we omit the interval $[-\pi, \pi]$ of the space $H^s_{per}(\mathbb{T})$ and we denote it simply by $H^s_{per}$. In addition, the norm given in \eqref{norm1} can be written as (see \cite{Ambrosio})
\begin{equation}\label{norm}
	\|f\|_{ H^{s}_{per}}^2=\|(-\Delta)^{\tfrac{s}{2}}f\|_{L^2_{per}}^2+\|f\|_{L^2_{per}}^2.
\end{equation}

For $s\geq0$, we denote
$
H^s_{per,odd(even)}:=\{ f \in H^s_{per} \; ; \; f \:\; \text{is an odd(even) function}\},
$
endowed with the norm and inner product in $H^s_{per}$. Since $\mathbb{C}$ can be identified with $\mathbb{R}^2$, notations above can also be used in the complex/vectorial case in the following sense: For $f\in {H}_{per}^s\times H_{per}^s$ we have $f=f_1+if_2\equiv (f_1,f_2)$, where $f_i\in H_{per}^s$, $i=1,2$.


\section{Remarks on the orbital stability and global well-posedness.}
\qquad Our aim in this section is to give a brief remark concerning the orbital stability of periodic waves,  local and global well-posedness for the associated Cauchy problem associated to the dfNLS equation as
\begin{equation}\label{CauchyProblem1}
	\begin{cases}
		iU_t+(-\Delta)^{s} U- |U|^2 U =0,\\
		U(x,0)=U_0(x).
	\end{cases}
\end{equation}
Indeed $U = U(x,t)$ is a solution of (\ref{fNLS1}), so are $e^{-i\zeta}U$ and $U(x-r,t)$ for any  $\zeta, r \in \mathbb{R}$. Considering $U=(u,v)$, we obtain that \eqref{fNLS1} is invariant under the transformations
\begin{equation}\label{T1}
	S_1(\zeta)U := \left(
	\begin{array}{cc}
		\cos{\zeta} & \sin{\zeta} \\
		- \sin{\zeta} & \cos{\zeta}
	\end{array}
	\right) \left(
	\begin{array}{c}
		u \\ v
	\end{array}
	\right)
\end{equation}
and
\begin{equation}\label{T2}
	S_2(r)U := \left(
	\begin{array}{c}
		u(\cdot - r, \cdot) \\
		v(\cdot - r, \cdot)
	\end{array}
	\right).
\end{equation}
The actions $S_1$ and $S_2$ define unitary groups in ${H}^s_{per}\times {H}^s_{per} $ with infinitesimal generators given by \\
$~~~~~~~~~~~~~~~~~~~~~~~~~~
	S_1'(0)U := \left(
	\begin{array}{cc}
		0 & 1 \\
		-1 & 0
	\end{array}
	\right) \left(
	\begin{array}{c}
		u \\
		v
	\end{array}
	\right) = J \left(
	\begin{array}{c}
		u \\
		v
	\end{array}
	\right)
$
and
$
	S_2'(0)U := \partial_x \left(
	\begin{array}{c}
		u \\
		v
	\end{array}
	\right).
$

\qquad Since the equation $(\ref{fNLS1})$ is invariant under the actions of  $S_1$ and $S_2$, we define the orbit generated by
$
\Phi = (\varphi,0)
$
as
\begin{equation*}
	\mathcal{O}_\Phi = \big\{S_1(\zeta)S_2(r)\Phi; \zeta, r \in \R \big\} = \left\{ \left(
	\begin{array}{cc}
		\cos{\zeta} & \sin{\zeta} \\
		-\sin{\zeta} & \cos{\zeta}
	\end{array}
	\right) \left(
	\begin{array}{c}
		\varphi(\cdot - r) \\
		0
	\end{array}
	\right); \; \zeta, r \in \R \right\}.
\end{equation*}

The pseudometric $d$ in ${H}^s_{per}\times {H}^s_{per}$is given by
$
	d(U,W):= \inf \{ \|U - S_1(\zeta)S_2(r)W\|_{{H}^s_{per}\times {H}^s_{per} } ; \; \, \zeta, r \in \R\}.
$
The distance between $U$ and $W$ is the distance between $U$ and the orbit generated by $W$ under the action of rotation and translation, so that
$
d(U,\Phi) = d(U,\mathcal{O}_\Phi).
$

\qquad We now present our notion of orbital stability.

\begin{definition}\label{defstab}
   We say that $\Phi$ is orbitally stable in ${H}^s_{per}\times H_{per}^s$ provided that, given $\varepsilon > 0$, there exists $\delta > 0$ with the following property: if $U_0 \in {H}^s_{per}\times H_{per}^s$ satisfies $\|U_0 - \Phi\|_{{H}^s_{per}} < \delta$, then the global solution $U(t)$ defined in the semi-interval $[0,+\infty)$ satisfies
	$
		d(U(t), \mathcal{O}_\Phi) < \varepsilon$,  for all  $t \geq 0.
	$
	Otherwise, we say that $\Theta$ is orbitally unstable in ${H}^s_{per}\times H_{per}^s$.
\end{definition}

\qquad Now, we present a global well-posedness result in ${H}^s_{per}\times H_{per}^s$.

\begin{proposition} \label{gwp}
Let $s \in \left( \frac{1}{4},1 \right]$ be fixed. The Cauchy problem in $(\ref{CauchyProblem1})$ is globally well-posed in ${H}^{s}_{per}\times {H}^{s}_{per}$. More precisely, for any $U_0 \in {H}^{s}_{per}\times {H}^{s}_{per}$ there exists a unique global  solution $U \in C([0,+\infty), {H}^{s}_{per}\times {H}^{s}_{per})$ such that $U(0)=U_0$ and it satisfies \eqref{fNLS1}. Moreover, for each $T>0$ the mapping
$$
U_0 \in  {H}^{s}_{per}\times {H}^{s}_{per}\longmapsto  U \in C([0,T], {H}^{s}_{per}\times {H}^{s}_{per})
$$
is continuous.
\end{proposition}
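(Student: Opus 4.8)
The plan is to prove Proposition \ref{gwp} in two stages: first establish \emph{local} well-posedness in $H^s_{per}\times H^s_{per}$ with an existence time depending only on $\|U_0\|_{H^s_{per}\times H^s_{per}}$, and then upgrade this to a global statement using the conserved quantities together with the defocusing (coercive) structure of the energy. Identifying $\mathbb{C}$ with $\mathbb{R}^2$, I would work directly with the complex formulation in \eqref{CauchyProblem1} and recover the real/vector statement at the end via $U=(u,v)$. A useful orienting remark is that $H^s_{per}$ is a Banach algebra only for $s>\tfrac12$, so for $s>\tfrac12$ a direct energy/Picard iteration in $C([0,T],H^s_{per})$ already closes because $|U|^2U$ maps $H^s_{per}$ into itself; the genuine content of the proposition lies in the range $s\in(\tfrac14,\tfrac12]$, where the nonlinearity must be controlled through the dispersive smoothing of the free propagator.

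For the local theory I would pass to the Duhamel formulation
\[
U(t)=e^{it(-\Delta)^s}U_0-i\int_0^t e^{i(t-t')(-\Delta)^s}\big(|U|^2U\big)(t')\,dt',
\]
and run a contraction argument in a Bourgain-type space $X^{s,b}$ adapted to the periodic fractional dispersion relation, with norm
\[
\|U\|_{X^{s,b}}^2=\sum_{k\in\mathbb{Z}}\int_{\mathbb{R}}\langle k\rangle^{2s}\,\langle \tau-|k|^{2s}\rangle^{2b}\,|\widetilde{U}(k,\tau)|^2\,d\tau,
\]
where $\widetilde U$ is the space-time Fourier transform, $\langle\cdot\rangle=(1+|\cdot|^2)^{1/2}$, and $b>\tfrac12$ is taken close to $\tfrac12$, with the solution localized in time to $[0,T]$. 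The homogeneous and inhomogeneous (energy) linear estimates in $X^{s,b}$ are standard; the whole argument hinges on the trilinear estimate
\[
\big\|U_1\overline{U_2}U_3\big\|_{X^{s,b-1}}\lesssim \prod_{j=1}^{3}\|U_j\|_{X^{s,b}},
\]
which makes the Duhamel map a contraction on a ball for $T$ small. By duality this reduces, via H\"older's inequality, to a product of $L^4_{t,x}$ factors and hence to a periodic $L^4$ Strichartz-type bound $\|e^{it(-\Delta)^s}P_Nf\|_{L^4_{t,x}(\mathbb{T}\times[0,1])}\lesssim N^{\theta}\|f\|_{L^2_{per}}$ for the fractional propagator. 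I expect this Strichartz step to be the main obstacle: on the torus it amounts to counting integer solutions of the resonance system $k_1\pm k_2=n$, $|k_1|^{2s}\pm|k_2|^{2s}=m$, a number-theoretic lattice-point problem that is strictly harder than its $\mathbb{R}$ analogue and whose output loss $\theta$ degrades as $s$ decreases. The admissible range $s\in(\tfrac14,1]$ is precisely the regularity threshold at which this loss is absorbed by the $H^s$ weights and the trilinear estimate closes.

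Once the fixed point yields a unique local solution on $[0,T]$ with $T=T(\|U_0\|_{H^s_{per}\times H^s_{per}})$, the continuous dependence of the data-to-solution map asserted in the statement follows by applying the same multilinear estimates to differences of two solutions, which produces a Lipschitz bound on the relevant $X^{s,b}$ (hence $C([0,T],H^s_{per}\times H^s_{per})$) norm in terms of the difference of the data.

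To globalize I would invoke the conserved quantities $F$ in \eqref{F} and $E$ in \eqref{E}. Because the nonlinearity is defocusing, both terms of $E(U)=\tfrac12\|(-\Delta)^{s/2}U\|_{L^2_{per}}^2+\tfrac12\|U\|_{L^4_{per}}^4$ are nonnegative, so that
\[
\|U(t)\|_{H^s_{per}\times H^s_{per}}^2=\|(-\Delta)^{s/2}U(t)\|_{L^2_{per}}^2+\|U(t)\|_{L^2_{per}}^2\le 2E(U_0)+2F(U_0),
\]
uniformly in $t\ge 0$. Since the local existence time depends only on this norm, the blow-up alternative forces the maximal existence time to be infinite, and iterating the local theory on successive intervals of uniform length produces the global solution $U\in C([0,+\infty),H^s_{per}\times H^s_{per})$; the real/vector statement is then immediate from the identification $U=(u,v)$. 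Apart from the periodic fractional Strichartz/lattice-point ingredient underlying the trilinear bound, every step is either routine $X^{s,b}$ machinery or a direct consequence of the coercivity of the conserved energy.
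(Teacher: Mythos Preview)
Your globalization step via the conserved, coercive energy $E$ and mass $F$ is exactly what the paper uses, and it is correct. The substantive difference is in the local theory: the paper does not set up any dispersive machinery at all. Its proof is a one-line citation of \cite{boling}, which obtains local (and then global) solutions by a Galerkin approximation combined with the uniform a~priori bound coming from the nonnegativity of $E$. The only structural input needed for that compactness argument is the Sobolev embedding $H^s_{per}\hookrightarrow L^4_{per}$, valid precisely for $s>\tfrac14$, so that $E(U)$ is finite on $H^s_{per}$ and controls $\|(-\Delta)^{s/2}U\|_{L^2_{per}}$. No Strichartz or $X^{s,b}$ estimates enter.

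By contrast, your route makes the entire local theory rest on a periodic $L^4$ Strichartz bound for $e^{it(-\Delta)^s}$ on $\mathbb{T}$, which you yourself flag as ``the main obstacle'' and then assert (without proof or citation) closes exactly for $s>\tfrac14$. That assertion is not justified: the lattice-point counting behind periodic Strichartz estimates for fractional dispersion is genuinely delicate, degrades badly as $s$ decreases through $\tfrac12$ (where curvature of $\xi\mapsto|\xi|^{2s}$ vanishes), and there is no reason the resulting loss should match the Sobolev threshold $s=\tfrac14$. In particular, the identification of $s>\tfrac14$ as ``precisely the regularity threshold at which this loss is absorbed'' is a hope, not an argument, and as written the proposal has a gap at this step for $s\in(\tfrac14,\tfrac12]$. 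If your contraction does close, it buys you uniqueness and Lipschitz dependence directly from the fixed point; the Galerkin route in \cite{boling} gives existence cheaply but requires a separate (easy, energy-type) argument for uniqueness. Either way, the threshold $s>\tfrac14$ in the statement is there because of the embedding $H^s_{per}\hookrightarrow L^4_{per}$ needed to define and exploit $E$, not because of any dispersive estimate.
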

\begin{proof}
The existence of local solutions can be established from the arguments in \cite{boling} where the authors have used Galerkin's method and the fact that $E(U)$ is a non-negative conserved quantity.
\end{proof}
\begin{remark}\label{rem12}
We see from Definition $\ref{defstab}$ that one of the requirements to establish the orbital instability in the energy space ${H}^{s}_{per}\times {H}^{s}_{per}$ is the existence of a convenient initial-data $U_0$ and a finite time $T^*>0$ such that $\lim_{t\longrightarrow T^*}||U(t)||_{{H}^{s}_{per}\times {H}^{s}_{per}}=+\infty$. Since $E(U)$ in $(\ref{E})$ is non-negative, we can obtain global solutions in time in ${H}^{s}_{per}\times {H}^{s}_{per}$ by a simple a priori estimate argument and the time $T^*>0$ above does not exist. Thus, the results obtained in Remark $\ref{rem1}$ give us that $\Phi$ is orbitally unstable in ${H}^{s}_{per}\times {H}^{s}_{per}$ even though the evolution $U$ is global in time.
\end{remark}

\section{Existence of periodic waves.}\label{evensolutions}

\qquad This section is devoted to prove the existence of odd periodic waves for the equation \eqref{EDO1} using two approaches. First, we use a variational characterization by minimizing a suitable constrained functional to obtain the existence of odd periodic waves with a two-lobe profile. Second, we present some tools concerning the existence of small amplitude periodic waves using bifurcation theory.

\subsection{Existence of periodic waves via variational approach.}

\qquad In this subsection, we prove the existence of even periodic solutions for \eqref{EDO1}  by considering the variational problem given by \eqref{minimization1}. Before that, we define the concept of solution with \textit{two-lobe} profile.
\begin{definition}\label{single-lobe}
We say that a periodic wave satisfying the equation \eqref{EDO1} has a two-lobe profile if there exists only one maximum and minimum on $[-\pi,\pi]$. Without loss of generality, we assume that the maximum point occurs at $x=\frac{\pi}{2}$ and the minimum point at $x=-\frac{\pi}{2}$.
\end{definition}

\qquad Let $\tau>0$ be fixed. Consider the set
\begin{equation}\label{minimizerset}
\mathcal{Y}_\tau:=\left\{u \in {H}^s_{per,odd} \; ; \; \|u\|_{L_{per}^2}^2=\tau \right\}.
\end{equation}
It is clear that
\begin{equation}\label{positivity}
\mathcal E (u):= E(u,0) \geq 0.
\end{equation}

\qquad One can establish the result of existence as follows:

\begin{proposition}\label{theorem1complex}
Let $s \in \left(\tfrac{1}{4}, 1 \right]$ and $\tau>0$ be fixed. The minimization problem
\begin{equation}\label{minimizationproblem}
\Gamma:= \inf_{u \in \mathcal{Y}_\tau} \mathcal E(u)
\end{equation}
has at least one solution, that is, there exists a real-valued function $\varphi\in\mathcal{Y}_\tau$ such that\linebreak $\mathcal E (\varphi)=\Gamma$. Moreover,  there exists $\omega>0$ such that $\varphi$ satisfies
$$
(-\Delta)^s \varphi -\omega \varphi + \varphi^3 =0.
$$
\end{proposition}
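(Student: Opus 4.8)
The plan is to prove this as a standard application of the direct method in the calculus of variations. The functional $\mathcal{E}$ is to be minimized over the constraint set $\mathcal{Y}_\tau$, and the main steps are to establish coercivity, extract a weakly convergent minimizing sequence, and pass to the limit using weak lower semicontinuity together with a compact embedding to handle the constraint. First I would take a minimizing sequence $\{u_n\}\subset\mathcal{Y}_\tau$ with $\mathcal{E}(u_n)\to\Gamma$. From $\mathcal{E}(u_n)\geq \tfrac12\|(-\Delta)^{s/2}u_n\|_{L^2_{per}}^2$ and the constraint $\|u_n\|_{L^2_{per}}^2=\tau$, the norm decomposition \eqref{norm} immediately gives a uniform bound on $\|u_n\|_{H^s_{per}}$, so the sequence is bounded in $H^s_{per,odd}$. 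Hence, up to a subsequence, $u_n\rightharpoonup\varphi$ weakly in $H^s_{per}$ for some $\varphi$; since the subspace of odd functions is weakly closed, $\varphi$ is odd.

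The key analytic input is the compact embedding $H^s_{per}\hookrightarrow L^2_{per}$ (in fact $H^s_{per}\hookrightarrow L^4_{per}$), valid for $s>\tfrac14$, which is exactly why the range $s\in(\tfrac14,1]$ appears. This compactness serves two purposes: it ensures $u_n\to\varphi$ strongly in $L^2_{per}$, so the constraint is preserved in the limit, i.e. $\|\varphi\|_{L^2_{per}}^2=\tau$ and thus $\varphi\in\mathcal{Y}_\tau$; and it gives $u_n\to\varphi$ strongly in $L^4_{per}$, so the quartic term converges, $\int u_n^4\to\int\varphi^4$. Meanwhile the quadratic gradient term $\|(-\Delta)^{s/2}u_n\|_{L^2_{per}}^2$ is weakly lower semicontinuous in $H^s_{per}$. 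Combining these, $\mathcal{E}(\varphi)\leq\liminf\mathcal{E}(u_n)=\Gamma$, and since $\varphi$ lies in the constraint set we get $\mathcal{E}(\varphi)=\Gamma$, so the infimum is attained. Note that because $\tau>0$ the minimizer is nontrivial, ruling out $\varphi\equiv0$.

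Finally, I would produce the Euler--Lagrange equation. Since $\varphi$ minimizes $\mathcal{E}$ subject to the single constraint $F(u)=\tfrac12\|u\|_{L^2_{per}}^2=\tfrac{\tau}{2}$, the Lagrange multiplier theorem yields a scalar $\omega$ with $\mathcal{E}'(\varphi)=\omega\, F'(\varphi)$ in the dual of $H^s_{per,odd}$, which written out is precisely
\begin{equation*}
(-\Delta)^s\varphi-\omega\varphi+\varphi^3=0.
\end{equation*}
The minimization is genuinely over odd functions, but because the Euler--Lagrange operator $(-\Delta)^s-\omega+(\,\cdot\,)^2$ maps odd functions to odd functions, the variational identity obtained by testing against odd perturbations extends to all of $H^s_{per}$, so $\varphi$ solves \eqref{EDO1} in the full periodic space. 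The sign claim $\omega>0$ does not follow from the abstract argument alone; I expect the main obstacle to lie here rather than in the compactness step. To pin down the sign of the multiplier one pairs the equation with $\varphi$ to get $\|(-\Delta)^{s/2}\varphi\|_{L^2_{per}}^2-\omega\tau+\int\varphi^4=0$, giving $\omega=\tfrac1\tau\bigl(\|(-\Delta)^{s/2}\varphi\|_{L^2_{per}}^2+\int\varphi^4\bigr)>0$ as long as $\varphi\not\equiv0$, which is guaranteed by $\tau>0$. The delicate point is ensuring the minimizer is not a constant (constants are excluded among nonzero odd functions, so this is automatic here) and verifying the multiplier rule applies, i.e. that $F'(\varphi)\neq0$, which again holds since $\varphi\neq0$.
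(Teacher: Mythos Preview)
Your proposal is correct and follows essentially the same approach as the paper's proof: both use the direct method with a minimizing sequence, boundedness in $H^s_{per,odd}$ from the energy and constraint, the compact embedding $H^s_{per}\hookrightarrow L^4_{per}\hookrightarrow L^2_{per}$ for $s>\tfrac14$ to preserve the constraint and pass to the limit in the quartic term, weak lower semicontinuity for the fractional gradient part, and then the Lagrange multiplier theorem together with pairing against $\varphi$ to obtain $\omega>0$. Your write-up is in fact slightly more careful than the paper's in noting that the odd subspace is weakly closed, that $F'(\varphi)\neq 0$, and that the Euler--Lagrange equation extends from odd test functions to all of $H^s_{per}$ by parity.
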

\begin{proof} Using the smoothness of the functional $\mathcal{E}$, we may consider a sequence of minimizers $(u_n)_{n\in \mathbb{N}} \subset Y_\tau$ such that
\begin{equation}\label{10}
\mathcal{E}(u_n) \longrightarrow \Gamma, \; \; \; n \longrightarrow \infty.
\end{equation}

Since $||(-\Delta)^{\frac{s}{2}}u_n||_{L_{per}^2}^2+||u_n||_{L_{per}^2}^2\leq 2\mathcal E(u_n)+\tau$, we obtain by $(\ref{10})$ that the sequence $(u_n)_{n \in \mathbb{N}} \subset \mathbb{R}$ is bounded in ${H}_{per,odd}^s$. For $s\in\left(\frac{1}{4},1\right]$, we see that the Sobolev space ${H}^s_{per,odd}$ is reflexive. Thus, there exists $\varphi \in {H}^s_{per,odd}$ such that (modulus a subsequence),
\begin{equation}\label{11}
u_n \rightharpoonup \varphi \; \text{weakly in} \; {H}^s_{per,odd}.
\end{equation}

Again, for $s \in \left( \tfrac{1}{4},1\right]$  we obtain that the embedding
\begin{eqnarray}\label{12}
{H}^s_{per,odd} \hookrightarrow  {L}^4_{per,odd}\hookrightarrow L_{per,odd}^2
\end{eqnarray}
is compact (see \cite[Theorem 2.8]{BergerSchechter} or \cite[Theorem 5.1]{Amann}). Thus, modulus a subsequence we also have
\begin{equation}\label{13}
u_n \longrightarrow \varphi \; \text{in} \; {L}^4_{per,odd}\hookrightarrow L_{per,odd}^2.
\end{equation}
\qquad Moreover, using the estimate
\begin{eqnarray*}
\bigg|\int_0^L \big(u_n^4-\varphi^4\big)\; dx \bigg| & \leq & \int_0^L \big|u_n^4-\varphi^4\big|\; dx \\
& \leq & \big( ||\varphi^3||_{{L}_{per}^4}+\|\varphi\|_{{L}_{per}^4}^2\,\|u_n\|_{{L}_{per}^4}+\|\varphi\|_{{L}_{per}^4}\,\|u_n\|^2_{{L}_{per}^4}+\|u_n\|_{{L}_{per}^4}^3\big)\|u_n-\varphi\|_{{L}_{per}^4}
\end{eqnarray*}
and \eqref{13}, it follows that $\|\varphi\|_{L_{per}^2}^2=\tau$. 
Furthermore, since $\mathcal{E}$ is  lower semi-continuous, we have
$$
\mathcal{E}(\varphi) \leq \liminf_{n \to \infty} \mathcal{E}(u_n)
$$
that is,
\begin{equation}\label{14}
\mathcal{E}(\varphi) \leq  \Gamma.
\end{equation}

\qquad On the other hand, once $\varphi$ satisfies $\|\varphi\|_{L_{per}^2}^2=\tau$, we obtain
\begin{equation}\label{15}
\mathcal{E}(\varphi) \geq \Gamma.
\end{equation}

Using \eqref{14} and \eqref{15}, we conclude
$$
\mathcal{E}(\varphi)= \Gamma=\inf_{u \in \mathcal{Y}_\tau} \mathcal{E}(u).
$$
In other words, the function $\varphi \in \mathcal{Y}_\tau \subset {H}^s_{per,odd}$ is a minimizer of the problem \eqref{minimizationproblem}. Notice that since $\tau>0$, we see that $\varphi$ is a real-valued function such that $\varphi \notequiv 0$.

\qquad By the Lagrange multiplier theorem, there exists a constant $c_1 \in \mathbb{R}$ such that
$$
(-\Delta)^s \varphi+ \varphi^3=c_1 \varphi.
$$
Denoting $c_1:=\omega$, we see that  $$\int_{-\pi}^{\pi}((-\Delta)^{\frac{s}{2}}\varphi)^2+\varphi^4dx=\omega\int_{-\pi}^{\pi}\varphi^2dx.$$ 

\qquad Thus, $\omega>0$ and $\varphi$ is a periodic minimizer of the problem $(\ref{minimization1})$ satisfying the equation
\begin{equation}\label{EDO2evencomplex}
(-\Delta)^s \varphi -\omega \varphi + \varphi^3 =0.
\end{equation}
\end{proof}


\begin{proposition}[Existence of Odd Solutions]\label{theorem1even}
Let $s \in \left(\tfrac{1}{4}, 1 \right]$ be fixed. Let $\varphi \in H^s_{per,odd}$ be the real-valued periodic minimizer given by the Proposition \ref{theorem1complex}. If $\omega\in (0,1]$, then $\varphi$ is the zero solution of the equation $(\ref{EDO1})$. If $\omega>1$, then $\varphi$ is the odd periodic two-lobe solution for the equation $(\ref{EDO1})$.
\end{proposition}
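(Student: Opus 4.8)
The plan is to treat the two assertions separately: the regime $\omega\in(0,1]$ by a direct spectral--gap identity, and the regime $\omega>1$ by first forcing $\omega>1$ from nontriviality and then analysing the nodal structure of the minimizer. For $\omega\in(0,1]$ I would test the Euler--Lagrange equation $(-\Delta)^s\varphi-\omega\varphi+\varphi^3=0$ from Proposition \ref{theorem1complex} against $\varphi$ in $L^2_{per}$, obtaining
\[
\|(-\Delta)^{\frac s2}\varphi\|_{L^2_{per}}^2-\omega\|\varphi\|_{L^2_{per}}^2+\|\varphi\|_{L^4_{per}}^4=0 .
\]
The decisive input is the Poincar\'e-type inequality on odd functions: writing $\varphi=\sum_{k\neq0}\widehat\varphi(k)e^{ikx}$ with $\widehat\varphi(0)=0$, every active frequency has $|k|\geq1$, so $\|(-\Delta)^{\frac s2}\varphi\|_{L^2_{per}}^2=\sum_k|k|^{2s}|\widehat\varphi(k)|^2\geq\sum_k|\widehat\varphi(k)|^2=\|\varphi\|_{L^2_{per}}^2$ because $|k|^{2s}\geq1$ for $s>0$. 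Substituting gives $(\omega-1)\|\varphi\|_{L^2_{per}}^2\geq\|\varphi\|_{L^4_{per}}^4\geq0$, so for $\omega\leq1$ both sides vanish and $\varphi\equiv0$. Conversely, the constraint $\|\varphi\|_{L^2_{per}}^2=\tau>0$ rules out the trivial minimizer, and the very same inequality then forces $\omega>1$; this already yields the stated dichotomy.

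It remains, for $\omega>1$, to upgrade ``nontrivial odd solution'' to ``two-lobe profile,'' and here I would use that $\varphi$ is a \emph{minimizer}, not just a solution. Differentiating $(\ref{EDO1})$ in $x$ gives $\mathcal{L}_1\varphi'=0$, so the even function $\varphi'$ lies in $\ker(\mathcal{L}_1)$, where $\mathcal{L}_1=(-\Delta)^s-\omega+3\varphi^2$; the two-lobe property is exactly the claim that $\varphi'$ has precisely two zeros on $[-\pi,\pi)$, i.e.\ $\varphi$ has a single interior maximum and minimum. I would control this count in two stages. First, the constrained second-order condition at the minimizer gives $\langle\mathcal{L}_1 h,h\rangle\geq0$ for every odd $h$ with $\langle h,\varphi\rangle=0$, which bounds the number of negative directions of $\mathcal{L}_1$ in the odd sector by one. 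Second, I would feed this spectral information into the oscillation theorem for fractional Hill operators advertised in the Introduction, which links the index of the eigenvalue $0$ of $\mathcal{L}_1$ to the number of sign changes of its eigenfunction $\varphi'$; matching the two forces exactly two zeros of $\varphi'$, hence a unique maximum and minimum, which translation places at $x=\pm\frac\pi2$ as in Definition \ref{single-lobe}.

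The main obstacle is precisely this nodal-counting step. For $s=1$ one simply invokes Sturm--Liouville theory and the explicit snoidal profile, but for $s\in(\tfrac14,1)$ the nonlocality of $(-\Delta)^s$ destroys the ODE phase plane and the pointwise maximum principle, so the sign structure of $\varphi$ on a half-period is not automatic. The delicate ingredient is therefore the fractional oscillation theorem, supplemented by a positivity (Perron--Frobenius) argument for the resolvent of $(-\Delta)^s+3\varphi^2$ to guarantee that $\varphi$ does not change sign on $(0,\pi)$; a rearrangement proof is awkward because the standard fractional P\'olya--Szeg\H{o} inequality does not respect oddness (the cross interactions between $(0,\pi)$ and $(-\pi,0)$ may increase the Gagliardo seminorm). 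I would thus isolate the sign-definiteness of $\varphi$ on the fundamental half-period as the crux lemma, and derive the single-extremum and two-lobe claims as consequences of it.
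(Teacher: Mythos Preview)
Your treatment of the regime $\omega\in(0,1]$ is correct and in fact cleaner than the paper's: the single pairing identity yielding $(\omega-1)\|\varphi\|_{L^2_{per}}^2\geq\|\varphi\|_{L^4_{per}}^4$ settles the dichotomy in one line, whereas the paper argues indirectly through the Morse index of $\mathcal{L}_1$ at the trivial state.

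For the two-lobe claim when $\omega>1$, however, you have missed the paper's device and your alternative has a genuine gap. You reject rearrangement because the fractional Polya--Szeg\"o inequality misbehaves on odd functions; the paper's answer is simply to translate first. Setting $\psi:=\varphi(\cdot-\tfrac{\pi}{2})$, one applies the periodic symmetric decreasing rearrangement of \cite[Appendix~A]{ClaassenJohnson} and \cite{Park} to $\psi$ rather than to $\varphi$. Translation is an isometry of $\mathcal{E}$ and of the constraint, and the cited rearrangement satisfies Polya--Szeg\"o in this periodic setting, so $\mathcal{E}(\psi^\star)=\Gamma$ with $\psi^\star$ even and symmetrically decreasing away from the origin; translating back by $+\tfrac{\pi}{2}$ gives the two-lobe odd profile of Definition~\ref{single-lobe}. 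Your worry about cross interactions between the two half-periods is exactly what the quarter-period shift is meant to neutralise.

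Your oscillation-theory alternative does not close, even granting the ``crux lemma'' on sign-definiteness. The constrained second-order condition yields $n(\mathcal{L}_{1,odd})\leq1$, but $\varphi'$ is \emph{even}, so counting its zeros via the oscillation theorem requires locating $0$ in the full spectrum of $\mathcal{L}_1$---equivalently, controlling $n(\mathcal{L}_{1,even})$---and the odd-sector bound does not supply this. There is also a circularity risk: in the paper the spectral count $n(\mathcal{L}_1)=1$, $z(\mathcal{L}_1)=1$ of Lemma~\ref{simpleKerneleven} is deduced \emph{from} the two-lobe profile of Proposition~\ref{theorem1even} (via the translated even minimizer and \cite[Lemma~3.3]{HurJohnson}), not the other way around, so the rearrangement step is the logical prerequisite for the very oscillation information you would need.
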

\begin{proof}
First, by a bootstrapping argument we infer that $\varphi \in H^\infty_{per,odd}$ (see \cite[Propostion 3.1]{Cristofani} and \cite[Proposition 2.4]{NataliLePelinovsky}). Second, the solution can be zero and we need to avoid this case in order to guarantee that the minimizer has a two-lobe profile. Indeed, if $\varphi\equiv0$, the operator  $\mathcal{L}_1$ in $(\ref{L1L2})$ is then given by
\begin{equation}\label{operazero}
\mathcal{L}_1= (-\Delta)^s-\omega.
\end{equation}
\qquad Using the Poincar\'e-Wirtinger inequality, we have that
\begin{equation}\label{poinc}
(\mathcal{L}_1u,u)_{L_{per}^2}=((-\Delta)^{\frac{s}{2}}u,(-\Delta)^{\frac{s}{2}}u)_{L_{per}^2}-\omega||u||_{L_{per}^2}^2\geq (1-\omega)||u||_{L_{per}^2}^2,
\end{equation}
for all $u\in H_{per,odd}^{2s}$. Thus, operator $\mathcal{L}_1$ in $(\ref{operazero})$ is non-negative when $\omega\in (0,1]$, so that $n(\mathcal{L}_{1})=0$ when $\mathcal{L}_1$ is defined over the space $L_{per,odd}^2$. On the other hand, we see that $\varphi$ is also a periodic minimizer of $G$ restricted only to one constraint and it is expected that $n(\mathcal{L}_1)\leq1$. In addition, we will see in Section \ref{spectralanalysis-section} that if $\varphi$ is a nonconstant minimizer then $\mathcal{L}_1\varphi'=0$ which implies, by Sturm's oscillation theorem for fractional linear operators, in fact that ${\rm n}(\mathcal{L}_1)=1$. Thus, we conclude that the zero solution $\varphi\equiv0$ is a minimizer of \eqref{minimizationproblem} only for $\omega \in (0, 1]$ and for $\omega \in \left(1, +\infty \right)$, solution  $\varphi$ is a non-constant minimizer.

\qquad Second, let us consider $\psi:=\varphi\left(\cdot -\frac{\pi}{2}\right)$ a translation of $\varphi$ by a quarter of the period $2\pi$. Since $\varphi$ is odd, we see that $\psi$ is even and it is easy to see that $\mathcal{E} (\psi)=\Gamma$. Using this new minimizer $\psi$, we can consider the even symmetric rearrangements $\psi^{\star}$ associated with $\psi$ and it is well known that such rearrangements are invariant under our constraint $\int_{-\pi}^{\pi}u^2=\tau$ and under the norm in $L_{per}^4$ by using \cite[Appendix A]{ClaassenJohnson}. Moreover, due to the fractional Polya-Szeg\"o Inequality in \cite[Lemma A.1]{ClaassenJohnson} (see also \cite[Theorem 1.1]{Park}), we obtain the following inequality
$$
\int_{-\pi}^\pi \big((-\Delta)^{\frac{s}{2}}\psi^{\star}\big)^2\;dx \leq \int_{-\pi}^\pi\big((-\Delta)^{\frac{s}{2}}\psi\big)^2\;dx .
$$

Thus, by \eqref{minimizationproblem}, we also obtain $\mathcal{E}(\psi^{\star}) = \Gamma$ in the Sobolev $H_{per,even}^s$ with $\psi^{\star}$ being symmetrically decreasing away from the maximum point $x=0$. To simplify the notation, we assume $\psi=\psi^{\star}$, so that $\varphi$ has an odd two-lobe profile according to the Definition $\ref{single-lobe}$.
\end{proof}
\subsection{Small-amplitude periodic waves}

\qquad The existence and convenient formulas for the odd small amplitude periodic waves associated to the equation \eqref{EDO1} will be presented in this section. We show that the local bifurcation theory used to determine the existence of odd small amplitude waves can be extended and the local solutions can be considered as global solutions and they are unique. This fact is very important in our context since it can be used as an alternative form to prove the existence of periodic even solutions (not necessarily having a two-lobe profile) for the equation $(\ref{EDO1})$ when $s\in (0,1]$ and to do so, we use the theory contained in \cite[Chapters 8 and 9]{buffoni-toland}. In addition, the existence of small amplitude periodic waves helps us in the numeric experiments contained in Section 4.

\qquad We will give some steps to prove the existence of small amplitude periodic waves. For $s \in (0,1]$, let $F: H_{per,odd}^{2s} \times (0,+\infty) \longrightarrow L_{per,odd}^2$ be the smooth map defined by
\begin{equation}\label{F-lyapunov}
	F(g,\omega) = (-\Delta)^s g - \omega g + g^3.
\end{equation}
We see that $F(g,\omega) = 0$ if and only if $g \in H_{per,odd}^{2s}$ satisfies \eqref{EDO1} with correspondent frequency of the wave $\omega \in (0, +\infty)$. The Fr\'echet derivative of the function $F$ with respect to the first variable is then given by
\begin{equation}\label{Dg}
	D_g F(g, \omega) f = \left( (-\Delta)^s - \omega + 3g^2 \right) f.
\end{equation}
Let $\omega_0 > 0$ be fixed. At the point $(0, \omega_0)\in H_{per,odd}^{2s} \times (0,+\infty)$, we have that
\begin{equation}
	D_gF(0, \omega_0) = (-\Delta)^s - \omega_0.
\end{equation}

\qquad As far as we can see, the nontrivial kernel of $D_gF(0, \omega_0)$ is determined by odd periodic functions $h \in H_{per}^{2s}$ such that
\begin{equation}
	\widehat{h}(k) (-\omega_0 + |k|^{2s}) = 0,\ \ \ \ \ \ \ k\in\mathbb{Z}.
\end{equation}
It follows that $D_g F(0, \omega_0)$ has the one-dimensional kernel if and only if $\omega_0 = |k|^{2s}$ for some $k \in \mathbb{Z}$. In other words, we have
\begin{equation}
	{\rm Ker}D_gF(0, \omega_0) = [\tilde{\varphi}_k],
\end{equation}
where $\tilde{\varphi}_k(x) = \sin(kx)$.

\qquad We are enabled to apply the local bifurcation theory contained in \cite[Chapter 8.4]{buffoni-toland} to obtain the existence of an open interval $I$ containing $\omega_0 > 0$, an open ball $B(0,r) \subset H_{per,odd}^{2s}$ for some $r>0$ and a unique smooth mapping
$$\omega \in I \longmapsto \varphi:= \varphi_\omega \in B(0,r) \subset H_{per,odd}^{2s}$$
such that $F(\varphi, \omega) = 0$ for all $\omega \in I$ and $\varphi\in B(0,r)$.

\qquad Next, for each $k \in \mathbb{N}$, the point $(0, \tilde{\omega}_k)$ where $\tilde{\omega}_k := |k|^{2s}$ is a bifurcation point. Moreover, there exists $a_0 > 0$ and a local bifurcation curve
\begin{equation}\label{localcurve}
	a \in (0,a_0) \longmapsto (\varphi_{k,a}, \omega_{k,a}) \in H_{per,odd}^{2s} \times (0,+\infty)
\end{equation}
which emanates from the point $(0, \tilde{\omega}_k)$ to obtain odd small amplitude $\frac{2\pi}{k}$-periodic solutions for the equation \eqref{EDO1}. In addition, we have $\omega_{k,0} = \tilde{\omega}_k$, $D_a \varphi_{k,0} = \tilde{\varphi}_k$ and all solutions of $F(g, \omega) = 0$ in a neighbourhood of $(0, \tilde{\omega}_k)$ belongs to the curve in $(\ref{localcurve})$ depending on $a \in (0,a_0)$.



\begin{proposition}
	
	Let $s \in (0,1]$ be fixed. There exists $a_0 > 0$ such that for all $a \in (0,a_0)$ there is a unique even local periodic solution $\varphi$ for the problem \eqref{EDO1} given by the following expansion:
	\begin{equation}\label{varphi-stokes2}
		\varphi(x) = a \sin(x) + \frac{1}{4(3^{2s}-1)}a^3\sin(3x) + \mathcal{O}(a^5) ,
	\end{equation}
	and
	\begin{equation}\label{varphi-stokes3}
		\omega =1 + \frac{3}{4}a^2 + \mathcal{O}(a^4),
	\end{equation}
	For $s\in (\tfrac{1}{4},1]$, the pair $(\varphi, \omega) \in H_{per,odd}^{s} \times (1, +\infty)$ is global in terms of the parameter $\omega > 1$ and it satisfies \eqref{EDO1}.
\end{proposition}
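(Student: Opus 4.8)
The plan is to pin down the coefficients by a formal Stokes (Lyapunov--Schmidt) expansion, to secure existence and local uniqueness from the Crandall--Rabinowitz theorem already prepared in the paragraphs preceding the statement, and then to upgrade the local curve to a global one by the analytic global bifurcation theory of \cite[Chapters 8 and 9]{buffoni-toland}. (I read the word \emph{even} in the statement as a typo for \emph{odd}, since the expansion is built from $\sin$.)

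First I would determine the coefficients. Writing $\varphi = a\varphi_1 + a^2\varphi_2 + a^3\varphi_3 + \mathcal{O}(a^4)$ and $\omega = 1 + a\omega_1 + a^2\omega_2 + \mathcal{O}(a^3)$, inserting into \eqref{EDO1}, and collecting powers of $a$ against the self-adjoint operator $L_0 := (-\Delta)^s - 1$, whose kernel on $H^{2s}_{per,odd}$ is $[\sin x]$ since $L_0\sin(kx) = (k^{2s}-1)\sin(kx)$: at order $a$ one gets $L_0\varphi_1 = 0$, so $\varphi_1 = \sin x$; at order $a^2$, $L_0\varphi_2 = \omega_1\sin x$, and the Fredholm solvability condition (projection onto $\sin x$) forces $\omega_1 = 0$, while the normalization $(\varphi - a\sin x,\sin x)_{L^2_{per}} = 0$ gives $\varphi_2 = 0$; at order $a^3$, using $\sin^3 x = \tfrac34\sin x - \tfrac14\sin 3x$, one finds $L_0\varphi_3 = (\omega_2 - \tfrac34)\sin x + \tfrac14\sin 3x$, so solvability yields $\omega_2 = \tfrac34$, and since $L_0\sin 3x = (3^{2s}-1)\sin 3x$ with $3^{2s}-1>0$ for $s>0$, one obtains $\varphi_3 = \frac{1}{4(3^{2s}-1)}\sin 3x$. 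This reproduces \eqref{varphi-stokes2}--\eqref{varphi-stokes3}. Because $F(-g,\omega) = -F(g,\omega)$ by \eqref{F-lyapunov}, local uniqueness forces $\varphi(-a) = -\varphi(a)$ and $\omega(-a) = \omega(a)$, so $\varphi$ carries only odd powers of $a$ and $\omega$ only even powers, which is exactly what produces the remainder orders $\mathcal{O}(a^5)$ and $\mathcal{O}(a^4)$.

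For the local branch itself I would invoke the Crandall--Rabinowitz theorem as set up above: by \eqref{Dg} the kernel of $D_gF(0,1) = (-\Delta)^s - 1$ on $H^{2s}_{per,odd}$ is the simple span $[\sin x]$, and the transversality condition holds since $\partial_\omega D_gF(0,1)[\sin x] = -\sin x \notin \mathrm{range}(D_gF(0,1)) = \overline{\mathrm{span}}\{\sin(kx): k\ge 2\}$. This furnishes a unique real-analytic curve $a \mapsto (\varphi_{1,a},\omega_{1,a})$ near $(0,1)$ for every $s\in(0,1]$, and by uniqueness its Taylor coefficients coincide with those just computed.

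The main work, and the main obstacle, is the global continuation, and this is where the restriction $s>\tfrac14$ enters. I would recast \eqref{EDO1} as the fixed-point equation $\varphi = ((-\Delta)^s + I)^{-1}\bigl[(\omega+1)\varphi - \varphi^3\bigr]$: the resolvent gains $2s$ derivatives, and the cubic superposition map is controlled by the embedding $H^s_{per,odd}\hookrightarrow L^4_{per,odd}$, which is compact \emph{precisely} for $s>\tfrac14$ (cf. \eqref{12}), so the right-hand side is a real-analytic compact operator and the equation has the identity-plus-compact Fredholm structure required in \cite[Chapters 8 and 9]{buffoni-toland}. Applying the analytic global bifurcation theorem yields a maximal connected continuum through the local branch that is either unbounded or a closed loop returning to the trivial family. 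To exclude the loop I would use that along the branch the linearization $D_gF(\varphi,\omega)=\mathcal{L}_1$, restricted to $H^{2s}_{per,odd}$, has trivial kernel, because $\ker\mathcal{L}_1=[\varphi']$ and $\varphi'$ is even and hence not odd; thus no secondary bifurcation or fold occurs for $\varphi\neq 0$, the continuum is an embedded analytic arc that is a graph over $\omega$, and $\omega$ is strictly monotone along it. A priori bounds then prevent escape: pairing \eqref{EDO1} with $\varphi$ gives $\|(-\Delta)^{s/2}\varphi\|_{L^2_{per}}^2 + \|\varphi\|_{L^4_{per}}^4 = \omega\|\varphi\|_{L^2_{per}}^2$, whence Hölder's inequality bounds $\|\varphi\|_{H^s_{per}}$ in terms of $\omega$ alone, forbidding blow-up at finite $\omega$. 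Combining monotonicity, the a priori estimate, and the local uniqueness near $(0,1)$ forces the arc to cover all of $(1,+\infty)$, giving the global odd family $(\varphi,\omega)\in H^s_{per,odd}\times(1,+\infty)$. The delicate point throughout is verifying the compactness and the a priori bounds uniformly in the low range $s\in(\tfrac14,\tfrac12]$, where $H^s_{per}$ fails to embed into $L^\infty$ and the cubic term must be handled purely through the $L^4$-based estimates; this threshold is exactly what confines the global statement to $s>\tfrac14$ while the local expansion persists for all $s\in(0,1]$.
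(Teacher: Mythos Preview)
Your overall architecture matches the paper's: local bifurcation from the simple eigenvalue at $\omega_0=1$ (which the text before the proposition already sets up) combined with the analytic global bifurcation machinery of \cite{buffoni-toland}. Your Stokes computation is correct and more explicit than the paper's, which simply cites \cite{NataliLePelinovskyKDV}; your parity argument for the remainder orders is a nice touch the paper omits.

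Where you and the paper diverge is in the functional-analytic packaging of the global step. The paper works in $H^{2s}_{per,odd}$, uses that this is a Banach \emph{algebra} when $s>\tfrac14$ so that $g\mapsto g^3$ stays in $H^{2s}$, and then gains two more orders via $((-\Delta)^s-\omega)^{-1}:L^2\to H^{2s}$ to land in $H^{4s}$; compactness of bounded closed pieces of the solution set then follows from the compact embedding $H^{4s}\hookrightarrow H^{2s}$. The Fredholm-index-zero property is checked separately by invoking the oscillation theorem of \cite[Theorem 3.12]{ClaassenJohnson} to count kernel solutions. You instead work in $H^s$ and compress everything into an identity-plus-compact fixed-point formulation using $H^s\hookrightarrow L^4$; this is cleaner but forces you to control the cubic through $L^4$ rather than through an algebra property. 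Both routes pinpoint $s>\tfrac14$ as the threshold, for different but compatible reasons.

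One genuine concern: to rule out folds and loops you invoke $\ker\mathcal{L}_1=[\varphi']$, but in this paper that identity is Lemma~\ref{simpleKerneleven}, proved \emph{after} the present proposition and only for the variational minimizer of Proposition~\ref{theorem1even}, not for an arbitrary point on the bifurcation branch. As written this is a forward (and possibly circular) reference. The paper sidesteps this entirely: it only checks the three hypotheses needed for \cite[Theorem 9.1.1]{buffoni-toland} (Fredholm index zero along $S$, compactness of bounded closed subsets of $S$, and $\omega$ nonconstant near the bifurcation point, the last being immediate from \eqref{varphi-stokes3}) and then quotes the theorem to produce the global continuous curve \eqref{globalcurve}. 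Your a~priori bound and monotonicity discussion are not wrong in spirit, but they go beyond what the paper actually proves here, and the nondegeneracy input you rely on is not yet available at this point in the argument.
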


\begin{proof}
The first part of the proposition has been determined in $(\ref{localcurve})$ by considering $k=1$. The expression in \eqref{varphi-stokes2} can be established similarly to \cite[Proposition 3.1]{NataliLePelinovskyKDV}. 
	
	\qquad To obtain that the  local curve \eqref{localcurve} extends to a global one for the case $s\in(\tfrac{1}{4},1]$, we need to prove first that $D_gF(g,\omega)$ given by \eqref{Dg} is a Fredholm operator of index zero. Let us define the set $S = \{(g, \omega) \in D(F) : F(g, \omega) = 0\}$. Consider $(g, \omega) \in H_{per,odd}^{2s} \times (1,+\infty)$ as a solution of $F(g, \omega) = 0$. We have, for $Y:=L_{per,odd}^{2}$ that
	\begin{equation}\label{fredalt}
		\mathcal{L}_{1|_{Y}} \psi \equiv D_g F(g, \omega) \psi = \left( (-\Delta)^s + 3g^2\right) \psi - \omega\psi = 0,\end{equation}
	has two linearly independent solutions and at most one belongs to $H_{per,odd}^{2s}$ (see \cite[Theorem 3.12]{ClaassenJohnson}). If there are no solutions in $H_{per,odd}^{2s}\backslash\{0\}$, then the equation  $\left( (-\Delta)^s - \omega + 3 g^2\right) \psi = f$ has a unique non-trivial solution $\psi \in H_{per,odd}^{2s}$ for all $f \in Y$ since ${\rm Ker}(\mathcal{L}_{1|_Y})^{\bot} = {\rm Range}(\mathcal{L}_{1|_Y}) = Y$.
	
	\qquad On the other hand, if there is a solution $\theta \in H_{per,odd}^{2s}$ we can use the standard Fredholm Alternative to obtain that $(\ref{fredalt})$ has a solution if, and only if,
	$$\int_{-\pi}^\pi \theta(x) f(x) dx = 0,$$
	for all  $f \in Y$. We then conclude in both cases that the Fr\'echet derivative of $F$ in terms of $g$ given by \eqref{Dg} is a Fredholm operator of index zero.
	
	\qquad Let us prove that every bounded and closed subset of $S$ is a compact set on $H_{per,odd}^{2s}\times (1,+\infty)$. For  $g\in H_{per,odd}^{2s}$ and $\omega>1$, we define $\widetilde{F}(g,\omega)=((-\Delta)^s-\omega)^{-1}g^3$. Since $s\in(\tfrac{1}{4},1]$, we see that $\widetilde{F}$ is well defined since $H_{per,odd}^{2s}$ is a Banach algebra, $(g,\omega)\in S$ if and only if $\widetilde{F}(g,\omega)=g$ and $\widetilde{F}$ maps $H_{per,odd}^{2s}\times (1,+\infty)$ into $H_{per,odd}^{4s}$. The compact embedding $H_{per,odd}^{4s}\hookrightarrow H_{per,odd}^{2s}$ shows that $\widetilde{F}$ maps bounded and closed sets in $H_{per,odd}^{2s}\times (1,+\infty)$ into $H_{per,odd}^{2s}$. Thus, if $R\subset S\subset H_{per,odd}^{2s}\times (1,+\infty)$ is a bounded and closed set, we obtain that $\widetilde{F}(R)$ is relatively compact in $H_{per,odd}^{2s}$. Since $R$ is closed, any sequence $\{(\varphi_n,\omega_n)\}_{n\in\mathbb{N}}$ has a convergent sub-sequence in $R$, so that $R$ is compact in $H_{per,odd}^{2s}\times (1,+\infty)$  as desired.
 
	\qquad Finally, the frequency of the wave given by $(\ref{varphi-stokes3})$ is not constant and we are enabled to apply \cite[Theorem 9.1.1]{buffoni-toland} to extend globally the local bifurcation curve given in \eqref{localcurve}. More precisely, there is a continuous mapping
	\begin{equation}\label{globalcurve}
		\left( 1, +\infty \right)\ni\omega \longmapsto \varphi(\cdot,\omega)=\varphi_{\omega} \in H_{per,odd}^{2s}
	\end{equation}
	where $\varphi_{\omega}$ solves equation $(\ref{EDO1})$.
\end{proof}

\begin{remark}\label{remstokes} It is important to mention that $\varphi \in H^{2s}_{per,odd}$ given by \eqref{varphi-stokes2} is a solution of the minimization problem \eqref{minimizationproblem} by using similar arguments as in \cite[Remark 3.2]{NataliLePelinovskyKDV}. \end{remark}

\section{Spectral analysis}\label{spectralanalysis-section}

\qquad Using the variational characterization determined in the last section, we obtain useful spectral properties for the linearized operator $\mathcal{L}$ in $(\ref{matrixop})$ around the periodic wave $\varphi$ obtained in Theorem $\ref{theorem1even}$. Let $s \in \left(\tfrac{1}{4},1\right]$ and $\omega>1$ be fixed. Consider $\varphi \in H^{\infty}_{per,odd}$ as the periodic minimizer obtained by Theorem \ref{theorem1even}. Our intention is to study the spectral properties of the matrix operator
$$
\mathcal{L}=\left(\begin{array}{cccc}\mathcal{L}_1 & 0\\
	0 & \mathcal{L}_2\end{array}\right):H^{2s}_{per} \times H^{2s}_{per} \subset L^2_{per} \times L^2_{per} \longrightarrow L^2_{per} \times L^2_{per},
$$
where $\mathcal{L}_1, \mathcal{L}_2$ are defined by
\begin{equation}\label{L1eL2}
	\mathcal{L}_1=(-\Delta)^s-\omega+3\varphi^2
	\qquad \text{and} \qquad
	\mathcal{L}_2=(-\Delta)^s-\omega+\varphi^2.
\end{equation}

\qquad We see that operators $\mathcal{L}_1$ and $\mathcal{L}_2$ are the real and imaginary parts of the main operator
$\mathcal{L}$.  Using \eqref{positivity}, we obtain the inequality
$$
\mathcal{G}(u):=G(u,0) \leq \mathcal{E} (u).
$$
\begin{lemma}\label{simpleKerneleven}
Let $s \in \left( \tfrac{1}{4},1 \right]$ and $\omega>1$ be fixed. If $\varphi \in H^\infty_{per,odd}$ is the periodic minimizer given in Theorem \ref{theorem1even}, then  $n(\mathcal{L}_1)=1$ and $z(\mathcal{L}_1)=1$. In particular, we have $n(\mathcal{L}_{1,even})=1$, $z(\mathcal{L}_{1,even})=1$, $n(\mathcal{L}_{1,odd})=0$, and $z(\mathcal{L}_{1,odd})=0$
\end{lemma}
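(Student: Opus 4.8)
The plan is to read off all the spectral data of $\mathcal{L}_1$ from a single explicit kernel element, combined with the oscillation theorem for fractional Hill operators, and then to split the counts using the reflection symmetry of the operator.

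First I would differentiate the profile equation $(\ref{EDO1})$ in $x$: since $(-\Delta)^{s}$ commutes with $\partial_x$ and $\partial_x(\varphi^{3})=3\varphi^{2}\varphi'$, this gives $\mathcal{L}_1\varphi'=0$, so $\varphi'\in\ker(\mathcal{L}_1)$ and $z(\mathcal{L}_1)\geq 1$. As $\varphi$ is odd, $\varphi'$ is even, which is exactly what will pin the kernel into the even sector. Next I would note that $\varphi^{2}$ is even, so the potential $3\varphi^{2}$ is even and $\mathcal{L}_1$ commutes with the reflection $R\colon u(x)\mapsto u(-x)$; hence $\mathcal{L}_1$ preserves the even and odd subspaces and decomposes as $\mathcal{L}_{1,even}\oplus\mathcal{L}_{1,odd}$, so that $n(\mathcal{L}_1)=n(\mathcal{L}_{1,even})+n(\mathcal{L}_{1,odd})$ and similarly for $z$.

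The core is to prove $n(\mathcal{L}_1)=1$ and $z(\mathcal{L}_1)=1$. For the location of the lowest eigenvalue I would invoke the positivity-improving property of the semigroup $e^{-t\mathcal{L}_1}$ for the periodic fractional Schr\"odinger operator, together with a Krein--Rutman/Perron--Frobenius argument, to conclude that the bottom eigenvalue $\lambda_0$ is simple with a strictly positive eigenfunction $\chi_0$; by simplicity and evenness of the potential, $R\chi_0=\chi_0$, so $\chi_0$ is even. Since the two-lobe profile of Proposition \ref{theorem1even} forces $\varphi'$ to vanish exactly at the single maximum $x=\tfrac{\pi}{2}$ and the single minimum $x=-\tfrac{\pi}{2}$, the kernel element $\varphi'$ has precisely two zeros and changes sign; it is therefore $L^2_{per}$-orthogonal to the positive $\chi_0$, and since $\chi_0$ does not change sign it cannot belong to $\ker(\mathcal{L}_1)=[\varphi']$, forcing $\lambda_0<0$. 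This gives $n(\mathcal{L}_1)\geq 1$, with the negative direction carried by the even function $\chi_0$. For the matching upper bound and for simplicity of the kernel I would apply the oscillation theorem for fractional Hill operators, which orders eigenfunctions by their number of sign changes: the sign-changing $\varphi'$, having exactly two zeros, sits immediately above the non-vanishing ground state, so that $0$ is a simple eigenvalue and no eigenvalue lies strictly between $\lambda_0$ and $0$. This yields $n(\mathcal{L}_1)=1$, $z(\mathcal{L}_1)=1$ and $\ker(\mathcal{L}_1)=[\varphi']$.

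Finally I would distribute these counts across the sectors. Because $\ker(\mathcal{L}_1)=[\varphi']$ and $\varphi'$ is even, $z(\mathcal{L}_{1,even})=1$ and $z(\mathcal{L}_{1,odd})=0$; because the unique negative eigenvalue is carried by the even ground state $\chi_0$, we obtain $n(\mathcal{L}_{1,even})=1$ and $n(\mathcal{L}_{1,odd})=0$, which completes the statement. I expect the oscillation step to be the main obstacle: the nonlocality of $(-\Delta)^s$ destroys the classical Sturm--Liouville zero-counting, so one cannot argue by ODE comparison and must rely on the fractional oscillation theorem, verifying carefully that the two-lobe profile produces exactly two simple sign changes of $\varphi'$ and that no eigenvalue hides in $(\lambda_0,0)$. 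As an independent safeguard for this last point I would compute the spectrum of $\mathcal{L}_1$ in the small-amplitude limit $(\ref{varphi-stokes2})$, where to leading order the constant mode has eigenvalue $\approx -1$, the $\cos x$ mode is the kernel and the $\sin x$ mode has eigenvalue $\approx \tfrac{3}{2}a^{2}>0$, so that $n=1$ there; since $z(\mathcal{L}_1)=1$ persists along the global branch $(\ref{globalcurve})$, the count $n(\mathcal{L}_1)=1$ cannot change by continuity.
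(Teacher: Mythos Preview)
Your overall structure is sound --- differentiating the profile equation to produce $\varphi'\in\ker(\mathcal{L}_1)$, invoking Krein--Rutman for a positive even ground state, and splitting by parity are all steps the paper takes as well. The gap is in your appeal to the oscillation theorem. For fractional Hill operators the available oscillation result (as used in \cite{HurJohnson} and \cite{ClaassenJohnson}) only guarantees that an eigenfunction with exactly two sign changes corresponds to the \emph{second or third} eigenvalue; it does not by itself force $\varphi'$ to sit ``immediately above'' the ground state. From your argument one therefore obtains only $n(\mathcal{L}_1)\leq 2$, not $n(\mathcal{L}_1)=1$, and the simplicity $z(\mathcal{L}_1)=1$ is likewise not a direct consequence. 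The paper closes this gap with two ingredients you do not use. First, it exploits the fact that $\varphi$ is a constrained minimizer on the odd sector to get $\mathcal{L}_{1,odd}\big|_{\{\varphi\}^{\perp}}\geq 0$; then it passes to the translated even profile $\psi=\varphi(\cdot-\pi/2)$ and invokes \cite[Lemma 3.3 -- (L1)]{HurJohnson}, which for a single-lobe even profile identifies $\psi'$ as the \emph{lowest} (and simple) eigenvalue of $\widetilde{\mathcal{L}}_{1,odd}$. This pins down $n(\mathcal{L}_1)=1$ and the simplicity of the kernel in the even sector of $\mathcal{L}_1$. Second, to rule out an odd kernel element $\bar y$, the paper shows $\varphi\in\mathrm{range}(\mathcal{L}_1)$ via $\mathcal{L}_1(\partial_\omega\varphi)=\varphi$ and then derives a sign contradiction: any such $\bar y$ would, by the oscillation theorem, have the same two-zero sign pattern as $\varphi$, yet must satisfy $(\varphi,\bar y)_{L^2_{per}}=0$.

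Your small-amplitude continuity safeguard does not rescue the argument: concluding that $n(\mathcal{L}_1)$ stays constant along the global branch requires knowing a priori that no eigenvalue crosses zero, i.e.\ that $z(\mathcal{L}_1)=1$ for every $\omega>1$, which is precisely what is in question. Without either the variational bound on $\mathcal{L}_{1,odd}$ or the Hur--Johnson single-lobe input (or some substitute), the fractional oscillation theorem alone leaves open the possibility of a second negative eigenvalue carried by a two-zero odd eigenfunction.
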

\begin{proof}The fact that $\varphi$ is a minimizer of $\mathcal{E}$ defined in $H_{per,odd}^1$, enables us to deduce that $\varphi$ also is a minimizer of $\mathcal{G}$ defined in $H_{per,odd}^1$. By \cite[Theorem 30.2.2]{BlanchardBruning}, we infer
$$
\mathcal{L}_{{1,odd\big|_{ \{\varphi  \}^{\perp}}}} \geq 0,
$$
where $\mathcal{L}_{1,odd}$ is the restriction of $\mathcal{L}_1$ in $L_{per,odd}^2$ (the subspace of $L_{per}^2$ constituted by odd periodic functions). Since $\varphi$ is a minimizer of the constrained variational problem $(\ref{minimizationproblem})$ with only one constraint, we have $n(\mathcal{L}_{1,odd})\leq1$. In addition, $\varphi'$ is even and it has two symmetric zeroes, namely $\pm x_0$, in the interval $[-\pi,\pi]$. By Krein Rutman's theorem, we see that the first eigenvalue of $\mathcal{L}_1$ needs to be associated to an even periodic function and by oscillation theorem, we have $n(\mathcal{L}_1)=n(\mathcal{L}_{1,odd})+n(\mathcal{L}_{1,even})\leq 2$. On the other hand, let us consider $\psi=\varphi(\cdot-\pi/2)$. Function $\psi$ is an even minimizer of the problem 
\begin{equation}\label{minimizationproblem1}
\Gamma:= \inf_{u \in \widetilde{\mathcal{Y}}_\tau} \mathcal E(u),
\end{equation}
where $\widetilde{\mathcal{Y}}_{\tau}=\{u \in {H}^s_{per,even} \; ; \; \|u\|_{L_{per}^2}^2=\tau\}$. Using similar arguments as above, it follows that $\widetilde{\mathcal{L}}_{1,even}=-\partial_x^2-\omega+3\psi^2$ satisfies $n(\widetilde{\mathcal{L}}_{1,even})\leq1$. Again, by Krein Rutman's theorem, it follows that the first eigenvalue of $\widetilde{\mathcal{L}}_1$ needs to be associated with an even periodic function, and thus $n(\widetilde{\mathcal{L}}_{1,even})=1$. Next, by \cite[Lemma 3.3 - (L1)]{HurJohnson}, we have that $\psi'$ corresponds to the lowest eigenvalue of $\widetilde{\mathcal{L}}_{1,odd}$ and it results to be simple. Therefore $n(\widetilde{\mathcal{L}}_{1,odd})=0$, so that $n(\mathcal{L}_1)=1$ and the eigenfunction associated with the negative eigenvalue results to be positive (or negative) and even by an application of the Krein-Rutman theorem.

\qquad We prove that $z(\mathcal{L}_1)=1$. First, again by \cite[Lemma 3.3 - (L1)]{HurJohnson}, we see that $0$ is the first eigenvalue of $\widetilde{\mathcal{L}}_{1,odd}$ and it results to be simple. Using the implicit function theorem and similar arguments as in \cite[Lemma 2.8]{NataliLePelinovskyKDV}, we obtain that for a fixed value $\omega_0>1$ the existence of a open interval $\mathcal{I}$ containing $\omega_0$ and a smooth function 
\begin{equation}
\mathcal{I}\ni\omega\longmapsto\psi(\cdot,\omega):=\psi_{\omega}\in H_{per,even}^{2s}
\label{smoothcurve}\end{equation}
that solves equation $(\ref{EDO1})$. Deriving this equation with respect to $\omega\in \mathcal{I}$, it follows that $\mathcal{L}_1(\partial_{\omega}\psi)=\psi$ and since $\varphi=\psi(\cdot+\pi/2)$, we automatically obtain $\mathcal{L}_1(\partial_{\omega}\varphi)=\varphi$, so that $\varphi\in \mbox{range}(\mathcal{L})$. 

\qquad Suppose the existence of $\omega_0>1$ such that 
$\{\varphi',\bar{y}\}$ is an orthogonal basis for $\ker(\mathcal{L}_1)$. Since $\varphi$ is odd and $\varphi'\in \ker(\mathcal{L}_1)$ is even, we see that $\bar{y}$ is odd and defined in the symmetric interval $[-\pi,\pi]$. The oscillation theorem implies that $\bar{y}$ has exactly two zeroes over the interval $[-\pi,\pi)$ since $\varphi'$ has two symmetric zeroes $\pm x_0$ in the interval $(-\pi,\pi)$. We can suppose, without loss of generality that $\bar{y}<0$ in $(-\pi,0)$ and $\bar{y}>0$ in $(0,\pi)$ and this behaviour is also satisfied by our solution $\varphi$ since we have considered that $\varphi$ has a two-lobe profile  by Proposition $\ref{theorem1even}$,. The fact that $\mathcal{L}_1$ is a self-adjoint operator defined in $L_{per}^2$ with domain $H_{per}^{2s}$, enables us to conclude that $\mbox{range}(\mathcal{L}_1)=[\ker(\mathcal{L}_1)]^{\bot}$, and thus $(\varphi,\bar{y})_{L_{per}^2}=(\mathcal{L}_1(\partial_{\omega}\varphi),\bar{y})_{L_{per}^2}=0$. This leads to a contradiction since we have also $\varphi<0$ in $(-\pi,0)$ and $\varphi>0$ in $(0,\pi)$.
\end{proof}
\begin{lemma}\label{simpleKernel2even}
Let $s \in \left( \tfrac{1}{4},1 \right]$ and $\omega>1$ be fixed. If $\varphi \in H^\infty_{per,odd}$ is the periodic minimizer given by Theorem \ref{theorem1even}, then  $n(\mathcal{L}_2)=2$ and  $z(\mathcal{L}_2)=1$. In particular, we have $n(\mathcal{L}_{2,even})=1$, $z(\mathcal{L}_{2,even})=0$, $n(\mathcal{L}_{2,odd})=1$, and $z(\mathcal{L}_{2,odd})=1$
\end{lemma}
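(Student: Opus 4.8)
The plan is to extract all of the spectral data of $\mathcal{L}_2$ from the single identity $\mathcal{L}_2\varphi=0$, which holds because $\varphi$ solves $(\ref{EDO1})$: indeed $\mathcal{L}_2\varphi=(-\Delta)^s\varphi-\omega\varphi+\varphi^2\cdot\varphi=(-\Delta)^s\varphi-\omega\varphi+\varphi^3=0$. Thus $0\in\sigma(\mathcal{L}_2)$ with eigenfunction $\varphi$, and since $\varphi$ is odd this eigenfunction lives in the odd sector. Because $\varphi^2$ is even, $\mathcal{L}_2$ commutes with the reflection $x\mapsto-x$ and splits as $\mathcal{L}_2=\mathcal{L}_{2,even}\oplus\mathcal{L}_{2,odd}$; hence $z(\mathcal{L}_{2,odd})\geq1$, while any kernel in the even sector would be an even solution of $\mathcal{L}_2h=0$, which I would rule out by the simplicity part of the oscillation argument below, yielding $z(\mathcal{L}_{2,even})=0$ and $z(\mathcal{L}_2)=1$. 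The statement then reduces to counting the negative eigenvalues in each parity class.

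For the global count I would combine the oscillation theorem for fractional Hill operators with the Krein--Rutman theorem, exactly in the spirit of Lemma \ref{simpleKerneleven}. The ground state of $\mathcal{L}_2$ is simple and carries a strictly positive (hence even) eigenfunction with no zeros, so the lowest eigenvalue is negative and belongs to the even sector. The kernel eigenfunction $\varphi$ has exactly two zeros over a period $[-\pi,\pi)$, forced by oddness and the two-lobe profile. In the periodic Hill picture the eigenfunctions carrying two sign changes form the first excited pair, so the zero eigenvalue is preceded by exactly two simple eigenvalues; this gives $n(\mathcal{L}_2)=2$ and, together with the simplicity of the band edge, $z(\mathcal{L}_2)=1$. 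For $s=1$ the count is transparent: $\varphi$ is snoidal and $\mathcal{L}_2$ is a shifted $n=1$ Lam\'e operator, whose three Lam\'e band-edge eigenfunctions sit at explicit levels with two of them strictly below the zero level carrying $\varphi$; for general $s\in(\tfrac14,1]$ the same ordering is supplied by \cite[Lemma 3.3]{HurJohnson}.

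The heart of the matter, and the step I expect to be the main obstacle, is the \emph{distribution} of the two negative eigenvalues between the even and odd sectors, since the lemma asserts one in each. Krein--Rutman has already placed one negative eigenvalue in the even sector. To obtain $n(\mathcal{L}_{2,odd})=1$ I would need to produce a genuinely negative direction inside the odd subspace, that is, an odd test function $g$ with $(\mathcal{L}_2g,g)_{L_{per}^2}<0$, equivalently to show that $\varphi$ is the \emph{second} odd eigenfunction rather than the odd ground state; the matching upper bound $n(\mathcal{L}_{2,odd})\leq1$ would then follow because every odd eigenfunction above $\varphi$ carries at least four zeros, and the value $n(\mathcal{L}_{2,even})=1$ would be forced by the total count $n(\mathcal{L}_2)=2$. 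Exhibiting that odd negative direction is the delicate point: the comparison $\mathcal{L}_2=\mathcal{L}_1-2\varphi^2\leq\mathcal{L}_1$ together with the monotonicity of eigenvalues is the natural tool, but it has to be applied sector by sector and carefully reconciled with the data of Lemma \ref{simpleKerneleven}, where $\ker(\mathcal{L}_1)=[\varphi']$ is even. Deciding which sector actually receives the second negative eigenvalue, through the fractional oscillation theorem and with the explicit $s=1$ Lam\'e structure as a guide, is exactly where the real work of the proof concentrates.
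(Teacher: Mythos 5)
There is a genuine gap, and it sits exactly where you say the ``real work of the proof concentrates'': you never actually prove $n(\mathcal{L}_{2,odd})=1$, nor the companion claim $z(\mathcal{L}_{2,even})=0$ (which you defer to ``the simplicity part of the oscillation argument below'' without supplying it). These sector statements \emph{are} the content of the lemma beyond the totals $n(\mathcal{L}_2)=2$, $z(\mathcal{L}_2)=1$, so what you have is a correct scaffolding (the identity $\mathcal{L}_2\varphi=0$, the parity splitting, Krein--Rutman for the even ground state, the oscillation-theorem bound coming from the two zeros of $\varphi$) around an acknowledged hole. For comparison, the paper does attempt precisely the comparison you name as the natural tool: it writes $\mathcal{L}_{2,odd}=\mathcal{L}_{1,odd}-2\varphi^2$, invokes $n(\mathcal{L}_{1,odd})=z(\mathcal{L}_{1,odd})=0$ from Lemma \ref{simpleKerneleven} to get $(\mathcal{L}_{1,odd}u,u)>0$ for odd $u$, and then concludes $\lambda_1(\mathcal{L}_{2,odd})<0$ through the min--max display $(\ref{minmax1})$--$(\ref{minmax2})$; it then obtains $z(\mathcal{L}_2)=1$ by translating to the even potential $\psi=\varphi(\cdot-\tfrac{\pi}{2})$ and applying Krein--Rutman to $\widetilde{\mathcal{L}}_2$. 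Your proposal is this plan minus its decisive computation, and also note that oscillation theory alone only places $0$ as the second \emph{or} third eigenvalue of $\mathcal{L}_2$; the paper pins down $n(\mathcal{L}_2)=2$ by combining the upper bound with one claimed negative eigenvalue in each sector, so without the sector step even your total count is not closed for general $s$.

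Moreover, your hesitation at this step is better founded than you may realize, and it shows the step cannot be completed softly. The obvious odd test function is $\varphi$ itself, but $\mathcal{L}_2\varphi=0$ makes its Rayleigh quotient exactly zero, so testing yields only $\lambda_1(\mathcal{L}_{2,odd})\le 0$; strictness is the whole issue (and the paper's chain $(\ref{minmax2})$ is itself delicate on this point, since replacing $\inf\{(\mathcal{L}_{1,odd}u,u)-2(\varphi^2u,u)\}$ by $\inf\{(\mathcal{L}_{1,odd}u,u)\}-2\sup\{(\varphi^2u,u)\}$ bounds the infimum from \emph{below}, not from above). Your own $s=1$ checkpoint makes the danger concrete: for the snoidal wave, the three band edges of the shifted $n=1$ Lam\'e operator are the $\mathrm{dn}$-type and $\mathrm{cn}$-type eigenfunctions at the two negative levels and $\mathrm{sn}\propto\varphi$ at zero, and \emph{both} $\mathrm{dn}(2\mathrm{K}x/\pi)$ and $\mathrm{cn}(2\mathrm{K}x/\pi)$ are even about $x=0$. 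Equivalently, the odd sector at $s=1$ is a Dirichlet problem on the half period $(0,\pi)$, where $\varphi>0$ is sign-definite and hence is the odd \emph{ground state} with eigenvalue $0$. So the Lam\'e structure you cite as ``transparent'' actually places both negative eigenvalues in the even sector, i.e.\ it gives $n(\mathcal{L}_{2,even})=2$, $n(\mathcal{L}_{2,odd})=0$ at $s=1$, the opposite of the split asserted in the lemma. Any completion of your sketch must confront this case head-on rather than use it as a guide, which confirms that the missing step is not a routine verification but the crux on which the lemma stands or falls.
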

\begin{proof}
	First, we see that $\varphi$ is an odd eigenfunction of $\mathcal{L}_2$ associated to the eigenvalue $0$ having two zeroes in the interval $[0,L)$. From the oscillation theorem for fractional linear operators, we obtain that $0$ needs to be the second or the third eigenvalue of $\mathcal{L}_2$.
 
	\qquad On the other hand,  let $\mathcal{L}_{2,even}$ be the restriction of $\mathcal{L}_2$ in the even sector of $L_{per}^2$. Thus, by Krein-Rutman's theorem we see that the first eigenvalue of $\mathcal{L}_2$ is always simple and it associated to a positive (negative) even eigenfunction, so that $n(\mathcal{L}_{2,even})\geq1$. We obtain by Courant's min-max characterization of the first eigenvalue that
 \begin{equation}\label{minmax1}\lambda_1=\inf \{(\mathcal{L}_{2,odd}u,u)_{L_{per}^2},\ ||u||_{L_{per}^2}=1\}=\inf \{(\mathcal{L}_{1,odd}u,u)_{L_{per}^2}-2(\varphi^2u,u)_{L_{per}^2},\ ||u||_{L_{per}^2}=1\}.\end{equation}
\qquad  Next, $n(\mathcal{L}_1)=1$ and the first negative eigenvalue of $\mathcal{L}_1$ is associated to a even eigenfunction. In addition, we see that $0$ is associated to the eigenfunction $\varphi'$ which is also even and thus, for $u\in H_{per,odd}^{2s}$ such that $||u||_{L_{per}^2}=1$, we obtain
 $(\mathcal{L}_{1,odd}u,u)_{L_{per}^2}>0$ and by $(\ref{minmax1})$, we get
\begin{equation}\label{minmax2}\begin{array}{lllll}\lambda_1&=&\inf \{(\mathcal{L}_{1,odd}u,u)_{L_{per}^2},\ ||u||_{L_{per}^2}=1\}-
2\sup \{(\varphi^2u,u)_{L_{per}^2},\ ||u||_{L_{per}^2}=1\}\\\\
&<&\displaystyle-2\left(\varphi^2\frac{\varphi}{||\varphi||_{L_{per}^2}},\frac{\varphi}{||\varphi||_{L_{per}^2}}\right)_{L_{per}^2}<0.\end{array}\end{equation}
Since $\lambda_1<0$, we obtain by $(\ref{minmax2})$ that $n(\mathcal{L}_{2,odd})\geq1$. The fact  $n(\mathcal{L}_2)=n(\mathcal{L}_{2,odd})+n(\mathcal{L}_{2,even})$ and  the oscillation theorem for fractional linear operators give us that $n(\mathcal{L}_2)=2$ as requested.

	\qquad We prove that $z(\mathcal{L}_2)=1$. Indeed, since $n(\mathcal{L}_{2,odd})=1$, we see that the corresponding eigenfunction $p$ associated to the first eigenvalue of $\mathcal{L}_{2,odd}$ is odd and consequently, $q=p\left(\cdot-\frac{\pi}{2}\right)$ is an even function that changes its sign. Consider $\widetilde{\mathcal{L}}_{2}=(-\Delta)^s-\omega+\psi^2$ a linear operator where $\psi=\varphi\left(\cdot-\frac{\pi}{2}\right)$ is even. By Krein-Rutman's theorem, we have that the first eigenfunction of $\mathcal{L}_2$ is simple and it is associated with a positive (negative) even periodic function and thus, $0$ can not be an eigenvalue associated with $\widetilde{\mathcal{L}}_{2,odd}$. Since $z(\widetilde{\mathcal{L}}_2)=z(\widetilde{\mathcal{L}}_{2,odd})+z(\widetilde{\mathcal{L}}_{2,even})$, we obtain from the fact $\psi$ is even that $z(\widetilde{\mathcal{L}}_2)=z(\widetilde{\mathcal{L}}_{2,even})=1$. Therefore, using the translation transformation $f=g\left(\cdot-\frac{\pi}{2}\right)$, we obtain $z(\mathcal{L}_2)=z(\mathcal{L}_{2,odd})=1$ as requested.
\end{proof}

\qquad As a consequence of Lemma \ref{simpleKerneleven}, we obtain the existence of a smooth curve of positive and periodic solutions $\varphi_{\omega}$ depending on the wave frequency $\omega>1$ all of the with the same period $2\pi$.

\begin{proposition}\label{smoothcurveeven}
Let $s \in \left(\tfrac{1}{4},1\right]$ and $\varphi_0 \in H^{\infty}_{per,odd}$ be the solution obtained in the Proposition \ref{theorem1even} which is associated to the fixed value $\omega_0>1$. Then, there exists a $C^1$ mapping $\omega \in \mathcal{I}  \longmapsto \varphi_\omega \in H^s_{per,odd}$ defined in an open neighbourhood $\mathcal{I} \subset (1,+\infty)$ of $\omega_0>0$ such that $\varphi_{\omega_0}=\varphi_0$.
\end{proposition}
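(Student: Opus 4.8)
The plan is to obtain the curve by applying the implicit function theorem to the smooth map $F$ restricted to the odd sector, using the invertibility of the linearization supplied by Lemma $\ref{simpleKerneleven}$. First I would consider
\begin{equation*}
F: H^{2s}_{per,odd} \times (1,+\infty) \longrightarrow L^2_{per,odd}, \qquad F(g,\omega) = (-\Delta)^s g - \omega g + g^3,
\end{equation*}
which is well defined and smooth because $H^{2s}_{per,odd}$ is a Banach algebra for $s \in \left(\tfrac14,1\right]$ and because each of $g \mapsto (-\Delta)^s g$, $g \mapsto -\omega g$, and $g \mapsto g^3$ preserves oddness. By hypothesis $\varphi_0 \in H^\infty_{per,odd}$ solves $(\ref{EDO1})$ at $\omega = \omega_0 > 1$, so $F(\varphi_0,\omega_0) = 0$, and the Fr\'echet derivative of $F$ in the first variable at $(\varphi_0,\omega_0)$ is precisely the restriction of $\mathcal{L}_1$ to the odd sector,
\begin{equation*}
D_g F(\varphi_0,\omega_0) = (-\Delta)^s - \omega_0 + 3\varphi_0^2 = \mathcal{L}_{1,odd}.
\end{equation*}

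The key step, and the main obstacle, is to verify that $\mathcal{L}_{1,odd}:H^{2s}_{per,odd}\to L^2_{per,odd}$ is an isomorphism. This is where Lemma $\ref{simpleKerneleven}$ does the work: it gives $z(\mathcal{L}_{1,odd})=0$, hence $\ker(\mathcal{L}_{1,odd})=\{0\}$. The point is that although $\ker(\mathcal{L}_1)=[\varphi_0']$ is nontrivial on the full space, the eigenfunction $\varphi_0'$ is even and therefore lies outside the odd sector. To upgrade injectivity to bijectivity I would observe that $\mathcal{L}_{1,odd}$ is a self-adjoint operator on $L^2_{per,odd}$ obtained from $(-\Delta)^s-\omega_0$ by the relatively compact multiplication perturbation $3\varphi_0^2$ (with $\varphi_0\in H^\infty_{per,odd}$ smooth and bounded), so it is Fredholm of index zero, exactly as established in the global bifurcation argument of the previous subsection. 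Trivial kernel together with Fredholm index zero yields surjectivity with closed range, and the open mapping theorem makes the inverse bounded.

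With this isomorphism in hand, the implicit function theorem provides an open interval $\mathcal{I}\subset(1,+\infty)$ containing $\omega_0$ and a $C^1$ (indeed smooth, since $F$ is smooth) mapping $\omega\in\mathcal{I}\longmapsto\varphi_\omega\in H^{2s}_{per,odd}$ with $\varphi_{\omega_0}=\varphi_0$ and $F(\varphi_\omega,\omega)=0$ for all $\omega\in\mathcal{I}$. Composing with the continuous embedding $H^{2s}_{per,odd}\hookrightarrow H^s_{per,odd}$ yields the stated $C^1$ curve into $H^s_{per,odd}$, and a bootstrapping argument as in the proof of Proposition $\ref{theorem1even}$ shows moreover that each $\varphi_\omega\in H^\infty_{per,odd}$. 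Finally, differentiating the identity $F(\varphi_\omega,\omega)=0$ with respect to $\omega$ recovers $\mathcal{L}_1(\partial_\omega\varphi)=\varphi$, consistent with the conclusion $\varphi_0\in\mathrm{range}(\mathcal{L}_1)$ already recorded in Lemma $\ref{simpleKerneleven}$.
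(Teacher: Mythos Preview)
Your proof is correct and follows the same approach as the paper: both apply the implicit function theorem to the map $F(g,\omega)=(-\Delta)^s g-\omega g+g^3$ restricted to a symmetry sector, using the crucial input from Lemma~\ref{simpleKerneleven} that the linearization $\mathcal{L}_{1,odd}$ has trivial kernel (the paper carries this out via the translated even function $\psi=\varphi(\cdot-\pi/2)$ in the proof of Lemma~\ref{simpleKerneleven}, which is equivalent). Your argument is in fact more detailed than the paper's one-line proof, spelling out the Fredholm index-zero justification that the paper leaves implicit.
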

\begin{proof}
The proof follows from the implicit function theorem. The fact has already been used in the proof of Lemma $\ref{simpleKerneleven}$.
\end{proof}

\begin{remark}
We cannot guarantee that for each $\omega \in \mathcal{I}_{\omega_0}$ given by Proposition \ref{smoothcurveeven} that $\varphi_{\omega}$ solves the minimization problem \eqref{minimizationproblem} except at $\omega=\omega_0$.
\end{remark}

\qquad The results determined in this subsection can be summarized in the following proposition:

\begin{proposition}\label{propspec}
	Let $\varphi$ be the two-lobe profile obtained in Proposition $\ref{theorem1even}$. We have that $n(\mathcal{L})=3$ and $\Ker(\mathcal{L})=[(\varphi',0),(0,\varphi)]$.
\end{proposition}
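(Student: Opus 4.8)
The plan is to exploit the block-diagonal structure of $\mathcal{L}$ in $(\ref{matrixop})$, which reduces the proof to a direct-sum bookkeeping of the spectral data already established in Lemmas $\ref{simpleKerneleven}$ and $\ref{simpleKernel2even}$. First I would observe that because $\mathcal{L}$ acts as $\mathcal{L}_1 \oplus \mathcal{L}_2$ on $H^{2s}_{per}\times H^{2s}_{per}$, the eigenvalue problem $\mathcal{L}(f,g)=\mu(f,g)$ decouples into the two scalar problems $\mathcal{L}_1 f=\mu f$ and $\mathcal{L}_2 g=\mu g$. Hence every eigenfunction of $\mathcal{L}$ can be taken of the form $(f,0)$ or $(0,g)$, where $f$ is an eigenfunction of $\mathcal{L}_1$ and $g$ an eigenfunction of $\mathcal{L}_2$, and the spectrum of $\mathcal{L}$ is the union, counted with multiplicities, of the spectra of $\mathcal{L}_1$ and $\mathcal{L}_2$. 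In particular the negative-eigenvalue counts and the kernels add.

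Then I would simply combine the two lemmas. Lemma $\ref{simpleKerneleven}$ gives $n(\mathcal{L}_1)=1$, $z(\mathcal{L}_1)=1$ and $\ker(\mathcal{L}_1)=[\varphi']$; Lemma $\ref{simpleKernel2even}$ gives $n(\mathcal{L}_2)=2$, $z(\mathcal{L}_2)=1$ and $\ker(\mathcal{L}_2)=[\varphi]$. Adding the negative counts yields
$$
n(\mathcal{L})=n(\mathcal{L}_1)+n(\mathcal{L}_2)=1+2=3,
$$
and adding the kernels yields
$$
\Ker(\mathcal{L})=\big(\ker(\mathcal{L}_1)\times\{0\}\big)\oplus\big(\{0\}\times\ker(\mathcal{L}_2)\big)=[(\varphi',0),(0,\varphi)],
$$
so that $z(\mathcal{L})=2$ as a byproduct, consistent with the count used in the instability argument.

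Since the substantive work has been carried out in Lemmas $\ref{simpleKerneleven}$ and $\ref{simpleKernel2even}$, there is no genuine obstacle at this stage. The only point requiring a word of care is the justification that the block-diagonal operator admits no ``mixed'' eigenfunctions beyond those arising from the individual blocks; this follows immediately from the fact that $\mathcal{L}$ is self-adjoint and strictly diagonal (the symplectic matrix $J$ plays no role here), so that the spectral decomposition of $\mathcal{L}$ is precisely the orthogonal direct sum of those of $\mathcal{L}_1$ on the first factor and $\mathcal{L}_2$ on the second.
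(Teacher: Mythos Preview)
Your proposal is correct and matches the paper's approach exactly: the paper presents this proposition simply as a summary of Lemmas~\ref{simpleKerneleven} and~\ref{simpleKernel2even}, offering no proof beyond a $\blacksquare$, precisely because the block-diagonal structure of $\mathcal{L}$ makes the conclusion immediate from those lemmas. Your write-up merely makes explicit the direct-sum bookkeeping that the paper leaves implicit.
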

\begin{flushright}
	$\blacksquare$
\end{flushright}

\section{Numerical experiments - Proof of Theorem $\ref{mainth}$}
\qquad In this section, we generate the periodic standing wave solutions of the dfNLS equation by using Newton's iteration method. 
The method is used to construct the standing wave solutions for the focusing fractional NLS equation \cite{klein},  the fractional KdV equation \cite{NataliLePelinovsky} and the fractional modified KdV equation \cite{NataliLePelinovskyKDV}. We then calculate sign of the inner products  $V_{even}=( \mathcal{L}_2^{-1} \varphi', \varphi' )_{L^2_{per}}$ and $V_{odd}=( \mathcal{L}_1^{-1} \varphi,\varphi )_{L^2_{per}}$ for $s\in (\frac{1}{4},1]$, numerically. 

\subsection{Numerical generation of odd periodic waves}
\qquad  Applying the Fourier transform to the equation \eqref{EDO1}, we obtain
\begin{equation} \label{ODE_Fourier}
 F(\widehat{\varphi})=\left(  |\xi|^{2s}-\omega \right) \widehat{\varphi}+\widehat{\varphi^3}=0.
\end{equation}
We choose the space interval as  $[-\pi,\pi] $ and $N=2^{12}$ Fourier modes. Numerically, $\widehat{\varphi}$ is approximated by a discrete Fourier transform. Since $\widehat{\varphi}$ is a vector with size $N\times 1$ we need to solve a nonlinear system \eqref{ODE_Fourier}. Therefore, we employ a Newton iteration method as
\begin{equation}
    \widehat{\varphi}_{n+1}=\widehat{\varphi}_n-\mathcal{J}^{-1}F(\widehat{\varphi}_n).
\end{equation}
Here,  the Jacobian $\mathcal{J}$ is defined by
\begin{equation}
 \mathcal{J} \widehat{Q}  =\left(  |\xi|^{2s}-\omega \right) \widehat{Q} +3\widehat{\varphi^2 Q}
\end{equation}
for  some  vector  $Q$. To avoid the calculation of the inverse of Jacobian directly we use Newton–Krylov method. Therefore, the inverse of the Jacobian is computed by the generalized minimal residual (GMRES) algorithm \cite{saad}.  The iteration is stopped when the residual norm of the numerical solution is of order $10^{-6}$.

\qquad  The periodic standing wave solution of the dfNLS equation with $s=1$  is given in \cite{GallayPelinovsky} as
\begin{equation}\label{snsol}
  \varphi(x)=\eta~\mbox{sn}\left(2 \frac{{\rm K (k)}}{\pi}x,~ k \right),
\end{equation}
where
$ \displaystyle \eta=2\sqrt{2}k\frac{{\rm K (k)}}{\pi}$.
Here  ${\rm K}(k)$ is the complete elliptic integral of first kind and 
$\displaystyle \omega=4(1+k^2)\frac{{\rm K}^2(k)}{\pi^2}$.

\begin{figure}[h]
 \begin{minipage}[t]{0.4\linewidth}
  \includegraphics[width=3.2in]{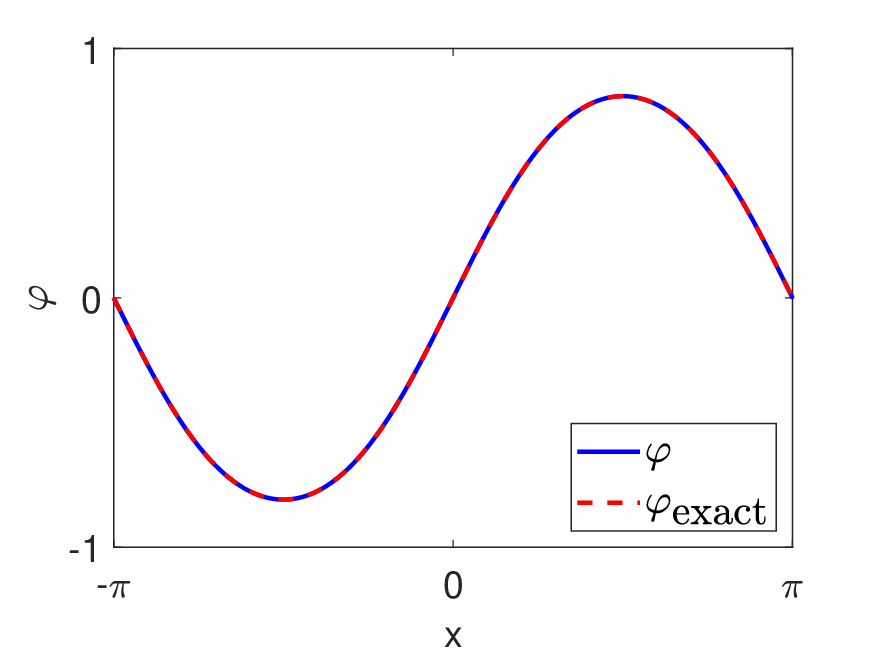}
 \end{minipage}
\hspace{30pt}
\begin{minipage}[t]{0.40\linewidth}
   \includegraphics[width=3.2in]{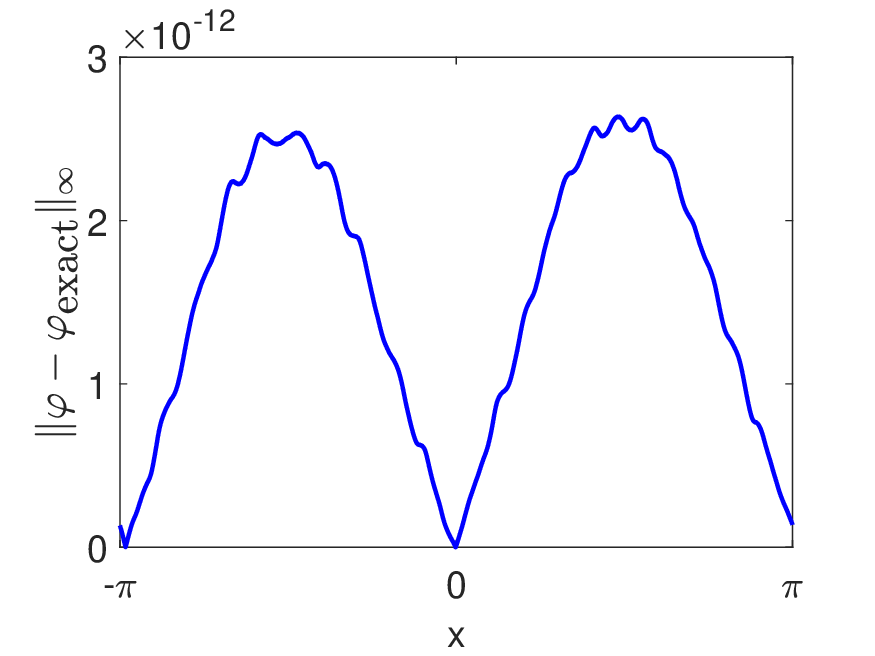}
 \end{minipage}
  \caption{The exact and the numerical solutions of the dfNLS equation  (left)    and the $L_\infty$-error between the exact and numerical solutions (right) where  the wave frequency  $\omega=1.5$ and  $s=1$.}
 \label{exact}
\end{figure}

\qquad In order to test the accuracy of our scheme, we compare the exact solution \eqref{snsol} with the numerical solution obtained by using \eqref{varphi-stokes2} as the initial guess.  In the left panel of   Figure \ref{exact}, we present the exact and numerical solutions for the frequency $\omega=1.5$.  In the right panel, we illustrate  $L_\infty$-error between the exact and numerical solutions.  These results show that our numerical scheme captures the solution remarkably well.

\qquad The exact solutions of the dfNLS equation are not known for $s\in (0, 1)$. The left panel of  Figure \ref{wave_profiles} shows the numerically generated periodic wave profiles for several values of $s$ with $\omega=1.5$. We do not observe any significant change in the wave profiles for different values of $s$. In the right panel of  Figure \ref{wave_profiles} we present numerical wave profiles for various values of $\omega$, with fixed $s=0.5$. The results indicate that the amplitude of the wave increases with the increasing values of $\omega$.

\begin{figure}[h]
 \begin{minipage}[t]{0.4\linewidth}
  \includegraphics[width=3.2in]{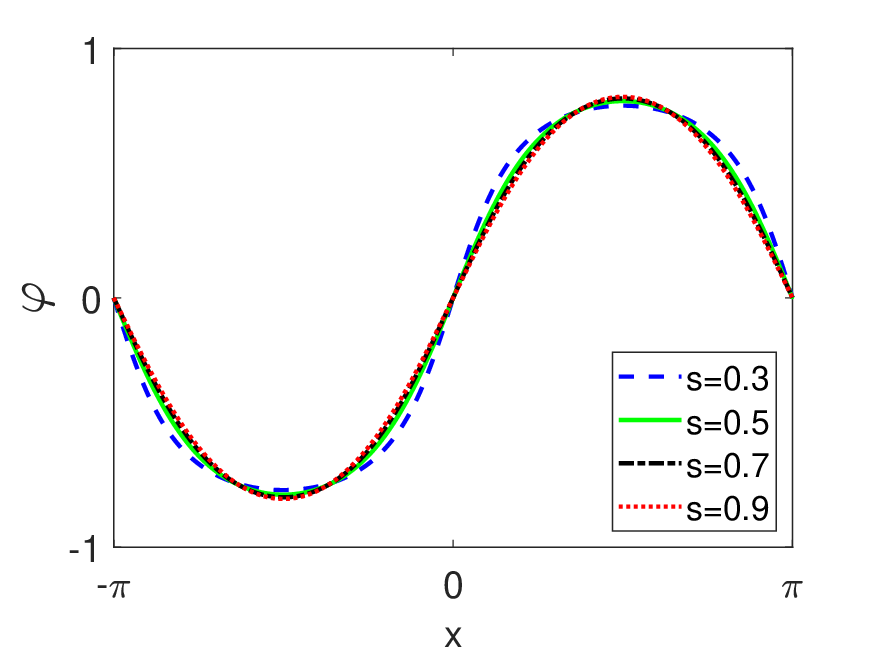}
 \end{minipage}
\hspace{30pt}
\begin{minipage}[t]{0.40\linewidth}
   \includegraphics[width=3.2in]{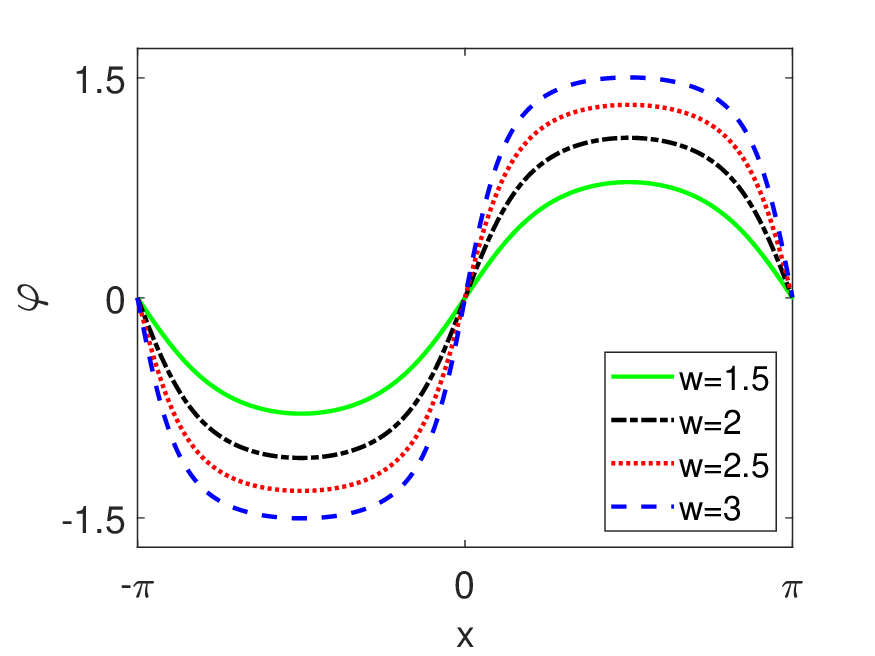}
 \end{minipage}
  \caption{Numerical wave profiles for various values of $s$, with fixed wave frequency $\omega=1.5$ (left) and numerical wave profiles for various values of $\omega$ with fixed $s=0.5$ (right).}
 \label{wave_profiles}
\end{figure}

\subsection{Numerical results for stability}

\qquad In this section, we numerically determine the behavior of the inner products $( \mathcal{L}_1^{-1} \varphi,\varphi )_{L^2_{per}}$ and $( \mathcal{L}_2^{-1} \varphi', \varphi' )_{L^2_{per}}$ in order to conclude the spectral instability. In fact, since $z(\mathcal{L}_1)=1$ and $\ker(\mathcal{L}_1)=[\varphi']$, we obtain that $\varphi\in \rm{range}(\mathcal{L}_1)=\ker(\mathcal{L}_1)^{\bot}$. Therefore, there exist a unique $\chi\in H_{per,odd}^{2s}$ such that $\mathcal{L}_1\chi=\varphi$.  On the other hand, by deriving equation $(\ref{EDO1})$ with respect to $\omega$ we have
\begin{equation*}
(~(-\Delta)^s- \omega  +3\varphi^2) \frac{d\varphi}{dw}=\varphi 
\end{equation*}
which yields $\displaystyle \frac{d\varphi}{dw}=\mathcal{L}_1^{-1} \varphi=\chi$ by uniqueness. Taking the inner product with $\varphi$ gives
\begin{equation*}
    ( \mathcal{L}_1^{-1} \varphi,\varphi )_{L^2_{per}}= 
    \frac{1}{2} \frac{d}{dw} \| \varphi\|^2_{L^2_{per}}.
\end{equation*}

\begin{figure}[h]
 \begin{minipage}[t]{0.4\linewidth}
  \includegraphics[width=3.2in]{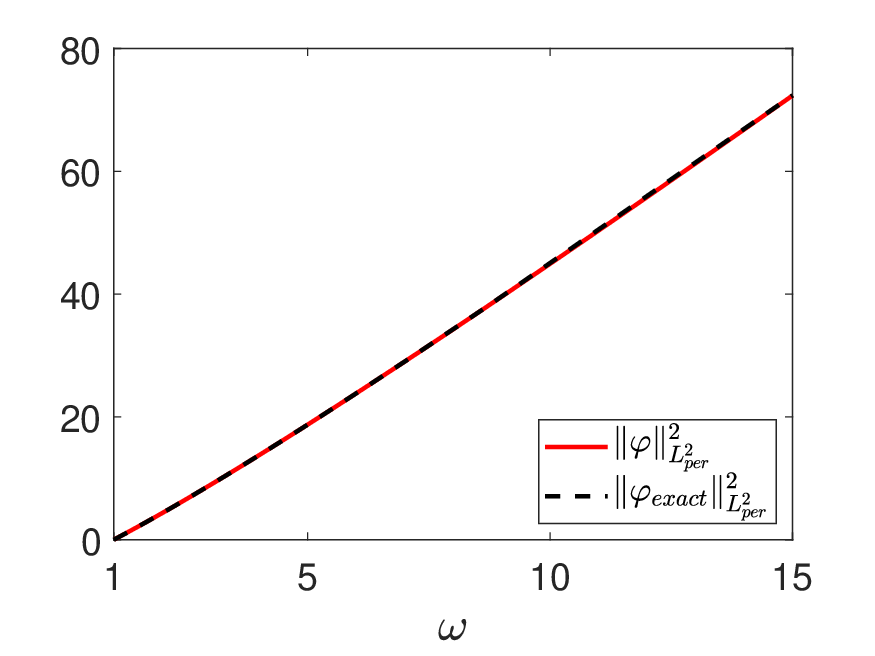}
 \end{minipage}
\hspace{30pt}
\begin{minipage}[t]{0.40\linewidth}
   \includegraphics[width=3.2in]{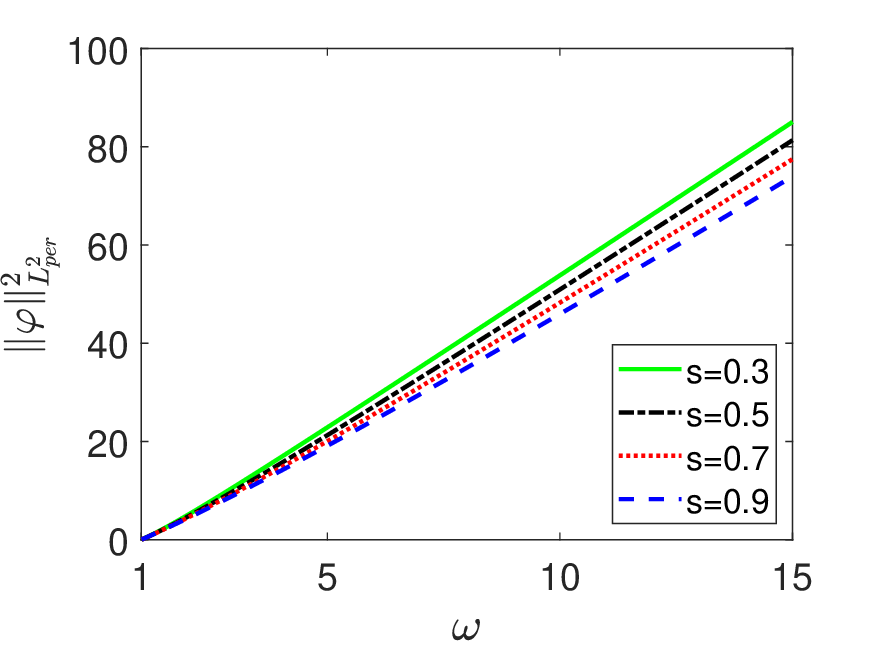}
 \end{minipage}
  \caption{The variation of $\| \varphi \|_{L_{per}^2}^2$ by  $\omega$ for  $s=1$ evaluated by using exact and numerical solutions (left) and the variation of $\| \varphi \|_{L_{per}^2}^2$ by  $\omega$ for several values of $s$ (right).}
 \label{V-odd-numeric}
\end{figure}

\begin{figure}[ht]
 \begin{minipage}[t]{0.45\linewidth}
   \includegraphics[width=3.1in]{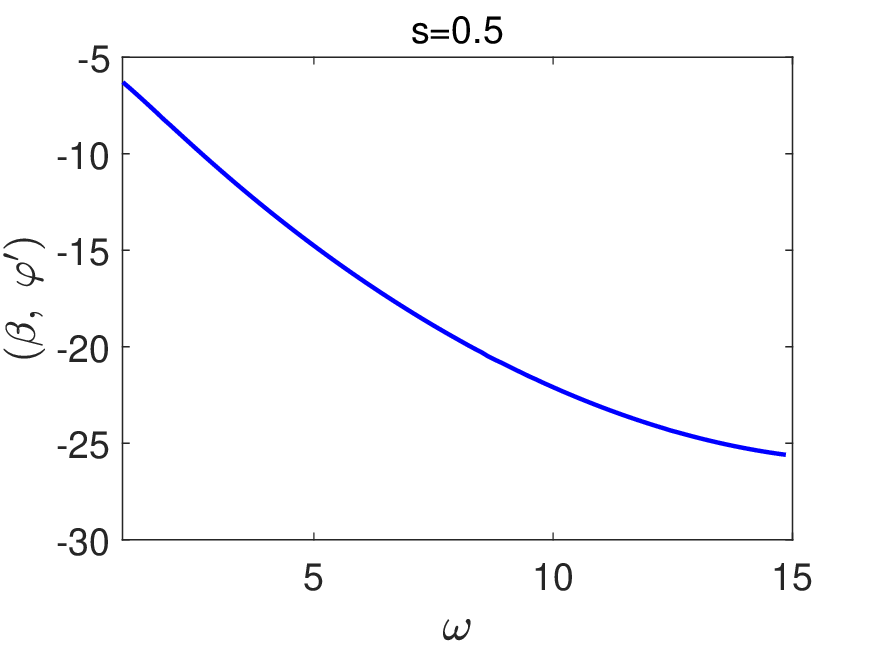}
 \end{minipage}
\hspace{30pt}
\begin{minipage}[t]{0.45\linewidth}
   \includegraphics[width=3.1in]{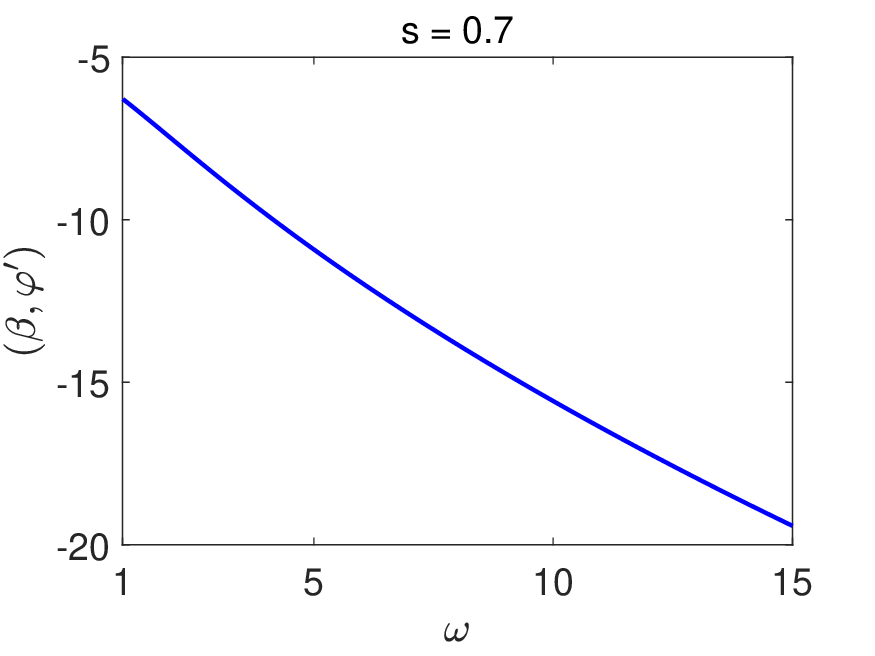}
 \end{minipage}
 \hspace{30pt}
\begin{minipage}[t]{0.45\linewidth}
   \includegraphics[width=3.1in]{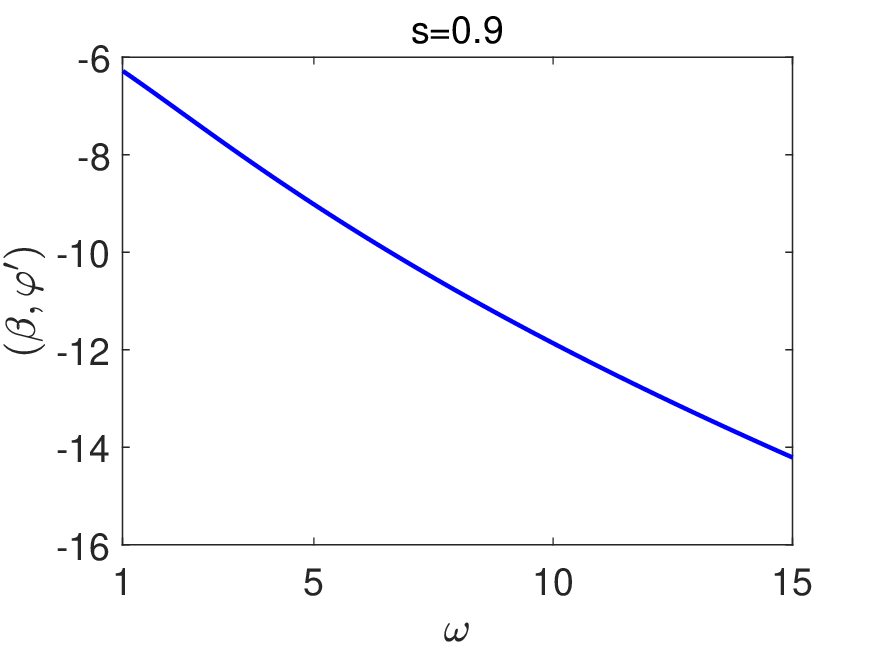}
 \end{minipage}
 \hspace{40pt}
\begin{minipage}[t]{0.45\linewidth}
   \includegraphics[width=3.1in]{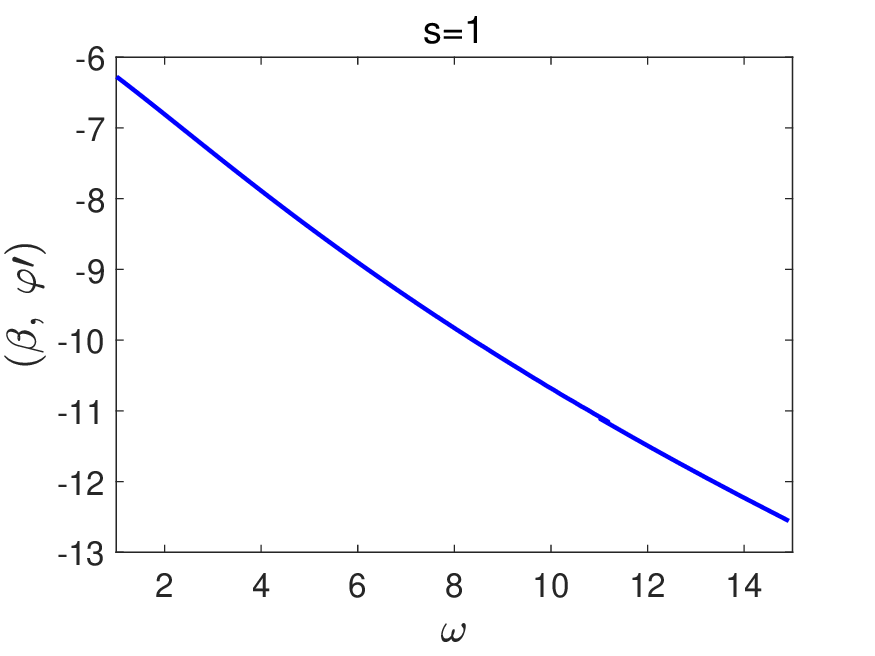}
 \end{minipage}
 \caption{ The variation of $( \mathcal{L}_2^{-1} \varphi', \varphi' )_{L^2_{per}}$ with $\omega$ for $s=0.5,\ 0.7,\ 0.9$, and 1.}
 \label{V-even-numeric}
\end{figure}

\qquad To determine  the behavior of the inner products $( \mathcal{L}_1^{-1} \varphi,\varphi )_{L^2_{per}}$ and $( \mathcal{L}_2^{-1} \varphi', \varphi' )_{L^2_{per}}$ we first obtain  the wave profile $\varphi$  numerically when $\omega>1$.
For small values of $\omega$, we use \eqref{varphi-stokes2} as the starting iteration.   As it is seen from Figure \ref{wave_profiles}, the amplitude of the periodic wave is increasing for increasing values of $\omega$. Therefore, the small amplitude solution \eqref{varphi-stokes2} can not be used as an initial iteration for larger values of $\omega$. For this reason, we use a continuation method, i.e., we use the numerical solution for the previous $\omega$ as an initial iteration and then the solutions are uniquely continued in $\omega$. Next, we use the trapezoidal rule to approximate the integral  $\| \varphi\|^2_{L^2_{per}} $ for each $\omega>1$.  The value of the inner products obtained by using the numerical wave profile and the exact solution \eqref{snsol} are compared in the left panel of Figure \ref{V-odd-numeric}.  We observe that the results coincide very well. 
The right panel of the figure shows the inner product $\| \varphi\|^2_{L^2_{per}} $ for several values of $s$. The numerical results indicate that   $\| \varphi\|^2_{L^2_{per}} $ is  an increasing function of $\omega>1$ therefore the sign of the inner product 
$( \mathcal{L}_1^{-1} \varphi,\varphi )_{L^2_{per}}$ is positive of $s\in (\frac{1}{4},1 ]$. By Lemmas $\ref{simpleKerneleven}$ and $\ref{simpleKernel2even}$, we see that $n(\mathcal{L}_{odd})=1$ and since $V_{odd}=( \mathcal{L}_1^{-1} \varphi,\varphi )_{L^2_{per}}$ is positive, the difference $n(\mathcal{L}_{odd})-n(V_{odd})=1-0=1$ is an odd number. Thus, the wave $\Phi$ is spectrally unstable.

\qquad Now, we compute the sign of $( \mathcal{L}_2^{-1} \varphi', \varphi' )_{L^2_{per}}$ for different values of $s$.  In fact, since $z(\mathcal{L}_{2})=1$ and $\ker(\mathcal{L}_2)=[\varphi]$, we obtain that $\varphi'\in \rm{range}(\mathcal{L}_2)=\ker(\mathcal{L}_2)^{\bot}$. Therefore, there exist a unique $\beta\in H_{per,even}^{2s}$ such that $\mathcal{L}_2\beta=\varphi'$.  Hence, we need to solve  
\begin{equation} \label{eq-beta}
   (-\Delta)^s\beta-\omega\beta+\varphi^2\beta=\varphi'.
\end{equation}
\qquad Applying the Fourier transform to \eqref{eq-beta} we obtain,
\begin{equation} \label{ODE_Fourier_beta}
\left(  |\xi|^{2s}-\omega \right) \widehat{\beta}+\widehat{\varphi^2 \beta}-i\xi \widehat{\varphi}=0.
\end{equation}
To solve \eqref{ODE_Fourier_beta} we use a Newton iteration method as described above. Figure \ref{V-even-numeric} presents the sign of $( \mathcal{L}_2^{-1} \varphi', \varphi' )_{L^2_{per}}$ for  several values of $s$. Numerical results show that the inner product is negative for all $s\in (\frac{1}{4},1]$. By Lemmas $\ref{simpleKerneleven}$ and $\ref{simpleKernel2even}$, we see that $n(\mathcal{L}_{even})=2$ and since $V_{odd}=( \mathcal{L}_1^{-1} \varphi,\varphi )_{L^2_{per}}$ is negative, the difference $n(\mathcal{L}_{even})-n(V_{even})=2-1=1$ is an odd number. Therefore, the wave $\Phi$ is spectrally unstable.
\newpage

\section*{Acknowledgments}
F. Natali is partially supported by CNPq/Brazil (grant 303907/2021-5).

\end{document}